\documentclass[11pt,reqno]{amsart}
\usepackage{amsfonts,amstext}
\usepackage{amsmath,amstext,amssymb,amscd,mathrsfs,amsthm}
\usepackage{enumerate}
\usepackage[dvipsnames,usenames]{xcolor}

\newtheorem{thm}{Theorem}[section]
\newtheorem{lem}[thm]{Lemma}
\newtheorem{cor}[thm]{Corollary}
\newtheorem{prop}[thm]{Proposition}

\theoremstyle{definition}
\newtheorem{defn}[thm]{Definition}

\theoremstyle{remark}
\newtheorem{rmk}[thm]{Remark}
\numberwithin{equation}{section}

\usepackage[allbordercolors={1 1 1}]{hyperref}

\usepackage[disable]{todonotes}

\renewcommand{\H}[1][1]{W^{#1,2}(\Omega)}

\renewcommand{\to}{\mathrel{\rightarrow}}

\newcommand{\R}{\mathbb{R}}

\def\g{\gamma}

\def\G{\Gamma}
\def\O{\Omega}

\newcommand{\norm}[1]{\left\|#1\right\|}

\def\<{\langle}
\def\>{\rangle}

\def\H{H^1_{\G_0}}

\begin{document}

\author[B. Feng]{Baowei Feng}
\address{School of Mathematics,
Southwestern University of Finance and Economics,
Chengdu 611130, Sichuan, P. R. China }
 \email{bwfeng@swufe.edu.cn}
 
 \author[Y. Guo]{Yanqiu Guo}
\address{Department of Mathematics and Statistics, Florida International University, Miami FL 33199, USA }
 \email{yanguo@fiu.edu}

\author[M. A. Rammaha]{Mohammad A. Rammaha}
\address{Department of Mathematics, University of Nebraska--Lincoln, Lincoln, NE  68588-0130, USA} \email{mrammaha1@unl.edu}

\title[On a structural acoustic model]{On a structural acoustic model with logarithmic supercritical source terms}

\date{May 28, 2026}
\subjclass[2020]{35L70, 35L75, 35B44, 35B40}
\keywords{structural acoustic models; logarithmic nonlinearity; wave-plate models; potential well solutions; energy decay rates; blow-up}

\maketitle

\begin{abstract}
In this paper, we study a structural acoustic model consisting of a 
semilinear wave equation defined on a bounded domain 
\(\Omega\subset\mathbb{R}^3\), coupled with a Kirchhoff--Love plate 
equation acting on a flat portion of the boundary of \(\Omega\). 
Primarily for mathematical interest, we impose nonlinear damping terms and 
\emph{logarithmic-type} supercritical source terms on the system. 
We investigate local and global well-posedness, energy decay rates 
of potential well solutions, and blow-up of solutions under 
different conditions on the parameters and initial data. 
The main novelties include the analysis of the interaction between 
nonlinear damping and logarithmic energy-amplifying source terms, 
as well as the development of techniques for handling logarithmic source terms within 
potential well theory. The logarithmic nonlinearities are not homogeneous under scaling, which creates difficulties in studying potential well solutions. 
The wave-plate coupling through the acoustic pressure also causes technical difficulties in the analysis, especially in the proof of blow-up of weak solutions.
\end{abstract}

\begin{quote}
    {\footnotesize  \tableofcontents}
\end{quote}

\section{Introduction} \label{Intro}
In this paper, we study a structural acoustic model with nonlinear damping and \emph{logarithmic-type} source terms:
\begin{align}\label{PDE}
\begin{cases}
u_{tt}-\Delta u + |u_t|^{m-1}u_t = |u|^{p-1} u \log |u| &\text{ in } \O \times (0,T),\\[1mm]
w_{tt}+\Delta^2w + |w_t|^{r-1}w_t + u_t|_{\G} = |w|^{p-1} w \log |w| &\text{ in }\G\times(0,T),\\[1mm]
u=0&\text{ on }\G_0\times(0,T),\\[1mm]
\partial_\nu u=w_t&\text{ on }\G\times(0,T),\\[1mm]
w=\partial_{\nu_\G}w=0&\text{ on }\partial\G\times(0,T),\\[1mm]
(u(0),u_t(0))=(u_0,u_1),\hspace{5mm}(w(0),w_t(0))=(w_0,w_1).
\end{cases}
\end{align}
Here,  $\O\subset\R^3$ is a bounded, open, connected domain with a smooth boundary
$\partial\O=\overline{\G_0\cup\G}$, where $\G_0$ and $\G$ are two disjoint, open, connected sets of positive Lebesgue  measure.  Moreover, $\G$ is a \emph{flat} portion of the boundary of $\O$ and is referred to as the elastic wall. The part $\G_0$ of the boundary $\partial\O$ describes a rigid  wall, while  the coupling takes place on the flexible wall  $\G$. We also assume that the curve $\partial \Gamma = \partial \Gamma_0$ is smooth. The vectors $\nu$ and $\nu_\G$ denote the outer normals to $\G$ and $\partial\G$, respectively.

Models such as \eqref{PDE} arise in the context of modeling sound propagation in an acoustic chamber surrounded by a combination of rigid and flexible walls. The velocity potential $u$ of the acoustic field in the chamber is described by the solution to a wave equation, while vibrations of the flexible wall are described by the solution $w$ to a coupled Kirchhoff–Love plate equation. The kinematic compatibility condition $\partial_\nu u=w_t$ means that the fluid and the plate must move together at the interface $\Gamma$, i.e., the normal velocity of the fluid at the boundary equals the velocity of the flexible wall. Moreover, we remark that $w=\partial_{\nu_\G}w=0$ on the edge $\partial\G$ is the boundary condition of a clamped plate.

Mostly for mathematical interest, we impose the nonlinear damping terms \( |u_t|^{m-1}u_t \) and \( |w_t|^{r-1}w_t \), 
as well as the \emph{logarithmic-type} source terms \( |u|^{p-1} u \log |u| \) and \( |w|^{p-1} w \log |w| \). 
One difficulty in analyzing models with logarithmic nonlinear terms is that \( |u|^{p-1} u \log |u| \) changes sign depending on the value of \( |u| \). 
Another difficulty is that \( |u|^{p-1} u \log |u| \) is not homogeneous under scaling. Furthermore, we emphasize that the nonlinear source terms are energy-amplifying and may cause blow-up of solutions.

Throughout this paper, we assume \(1<p<6\) and define the function \( |s|^{p-1}s\log |s| \) to be \(0\) when \(s=0\), so that \( |s|^{p-1}s\log |s| \in C^1(\mathbb R) \). 
The parameters \(m\), \(r\), and \(p\) satisfy
\begin{align} \label{p}
m, r \geq 1, \quad 1 < p < 6, \quad p \frac{m+1}{m} < 6.
\end{align}
We remark that the condition \(p \frac{m+1}{m} < 6\) means that, as the power \(p\) of the source term approaches \(6\), the damping exponent \(m\) must be sufficiently large in order to control the source term in the well-posedness result. This study includes the \emph{supercritical} case ($3<p<6$), in which the source term $|u|^{p-1} u \log |u|$ is not locally Lipschitz from $H^1(\Omega)$ to $L^2(\Omega)$.

In system \eqref{PDE}, the coupling of the wave equation for $u$ and the plate equation for $w$ 
is realized through the boundary condition $\partial_\nu u = w_t$ on 
$\Gamma$ and the term $u_t|_{\Gamma}$ in the plate equation. Physically, 
$u_t|_{\Gamma}$ is the acoustic pressure load on the flexible wall 
$\Gamma$. Note that weak solutions $(u,u_t)$ of the wave equation belong 
to the finite energy space $H^1_{\Gamma_0}(\Omega) \times L^2(\Omega)$. 
Thus, $u_t$ belongs to $L^2(\Omega)$, and in general, without additional 
regularity, one cannot take the trace of an $L^2(\Omega)$ function on 
$\Gamma$. Therefore, in the definition of weak solutions below, namely, 
Definition~\ref{def:weaksln}, we test $u_t|_{\Gamma}$ with a test 
function $\psi$, and pass the time derivative to $\psi$ using integration 
by parts, and obtain the terms
\begin{align*}
\int_{\Gamma} u|_{\Gamma}(t) \psi_t(t)\,d\Gamma 
- \int_{\Gamma} u|_{\Gamma}(0) \psi_t(0)\,d\Gamma
- \int_0^t \int_{\Gamma} u|_{\Gamma} \psi_t \,d\Gamma \,d\tau,
\end{align*}
all of which make perfect sense since $u \in H^1_{\Gamma_0}(\Omega)$ is 
sufficiently smooth to take the boundary trace on $\Gamma$. Such an idea 
of integration by parts in time is used at various places in our analysis 
to handle the important term $u_t|_{\Gamma}$. Nevertheless, the lack of 
sufficient regularity of $u_t|_{\Gamma}$ still causes difficulties 
throughout the paper, in particular in proving the blow-up of weak 
solutions.

Structural acoustic models have a rich history.
For example, Beale \cite{Beale76} studied the spectral properties of an acoustic
boundary condition for the wave equation arising in theoretical acoustics.
Banks et al. \cite{Banks1991} introduced a two-dimensional model describing
acoustic-structure interaction in a rectangular acoustic cavity, where the boundary
consists of hard walls on three sides and a vibrating wall on the fourth side,
modeled by an Euler--Bernoulli beam equation.
Avalos and Lasiecka \cite{Avalos1,Avalos3,Avalos4} studied the stabilization,
controllability, and long-time behavior of structural acoustic models.
Related stability results for structural acoustic models with flexible curved walls
were obtained by Cagnol et al. \cite{Cagnol1}. Further developments involving more general structural components, such as
Reissner--Mindlin plates, Timoshenko beams, shear effects, and thermal effects,
can be found in the works of Grobbelaar-Van Dalsen \cite{MG1,MG3}.
We also mention the recent work of Becklin and Rammaha \cite{Becklin-Rammaha2}
on Hadamard well-posedness for a structural acoustic model with a supercritical
source and damping terms, as well as our previous works
\cite{FENG-JDE,FENG-JMAA} on asymptotic behavior and blow-up for related
structural acoustic models.
Furthermore, we refer the reader to the book \cite{Las2002} by Lasiecka and the
book \cite{VKEE} by Chueshov and Lasiecka, which provide comprehensive
overviews and references to many works on this topic. 
Other related contributions worthy of mention include \cite{Avalos2, Pelin, LAS1999,Vicente-jde}.

In system \eqref{PDE}, the logarithmic source terms compete with the damping terms: source terms amplify the energy while damping terms dissipate it. 
A classical result on wave equations with competing
damping term $|u_t|^{m-1}u_t$ and source term $|u|^{p-1}u$ was 
established by Georgiev and Todorova \cite{GT}. In particular, on a bounded 3D domain with $1<p\leq 3$,   
if the damping term dominates the source term ($m\geq p$), then weak 
solutions are global in time; whereas if the source term surpasses 
the damping term ($p>m$), then weak solutions may blow up if the initial 
energy is large enough. We refer to $p=3$ as the \emph{critical} exponent for the source term $|u|^{p-1}u$ in 3D because it is locally Lipschitz from $H^1(\Omega)$ to $L^2(\Omega)$ when $1<p\leq 3$.
An extension to a \emph{supercritical} source term 
($3< p < 6$ in 3D) was obtained by Bociu and Lasiecka 
\cite{BL3, BL2, BL1}. Furthermore, for a nonlinear wave equation 
with a rapidly growing polynomial source term (including the 
range $p\geq 6$ in a 3D periodic domain), Guo \cite{Guo98} established the global well-posedness of weak solutions, provided the damping term 
has sufficiently fast growth rate 
($m\geq \frac{3}{2}p - \frac{5}{2}$ if $p\geq 6$) and the initial 
data have higher integrability. There are many other interesting works 
concerned with the competition of source terms and various types of 
damping terms in nonlinear wave equations. See, for example, 
\cite{BLR1, GR, GRSTT, MOHNICK2} and references therein. 
A main novelty of this work is the introduction of logarithmic source terms into structural acoustic models. In the 
literature, logarithmic-type nonlinearities have been introduced and 
studied for nonlinear wave equations. For example, the work of Tao 
\cite{Tao2007} established global regularity for a logarithmically 
supercritical defocusing nonlinear wave equation in 3D with a 
defocusing nonlinearity $u^5 \log(2+u^2)$ for spherically symmetric 
initial data. For physical motivations for logarithmic nonlinearity, we refer the reader to \cite{BialynickiBirula1975, Hefter1985}. For other 
works on the analysis of wave equations with logarithmic nonlinearity, see, e.g., \cite{  DiShangSong2020,  HaPark2021,   Kafini2020}.

The main focus of this manuscript is the potential well solutions of 
system \eqref{PDE}. The study of potential well solutions for nonlinear hyperbolic equations has a long history, going back to the foundational works of Sattinger \cite{Sattiinger68} and Payne–Sattinger \cite{PS}. In particular, Payne and 
Sattinger \cite{PS} considered a nonlinear hyperbolic equation in the 
canonical form:
\begin{align}\label{canonical}
u_{tt} = \Delta u + f(u), \;\; \text{with} \;\; u(0)=u_0, 
\;\; u_t(0)=u_1,
\end{align}
where $u=0$ on the boundary of a smooth bounded domain 
$\Omega\subset \mathbb{R}^n$. An important quantity for equation 
\eqref{canonical} is the potential energy functional 
$J(u)=\frac{1}{2}\|\nabla u\|_{L^2(\Omega)}^2 - \int_{\Omega} F(u)\,dx$, 
where $F(u)= \int_0^u f(s)\,ds$. The depth of the potential well $d$ is defined as the mountain pass level, i.e., 
$d=\inf_{u \neq 0} \sup_{\lambda>0} J(\lambda u)$. Assume that the initial total energy 
$E(0) = \frac{1}{2}\|u_1\|_{L^2(\Omega)}^2 + J(u_0)$ satisfies $E(0) < d$. If $\|\nabla u_0\|_{L^2}^2 > \int_{\Omega} u_0 f(u_0)\,dx$, then the weak solution is global in time; whereas if 
$\|\nabla u_0\|_{L^2}^2 < \int_{\Omega} u_0 f(u_0)\,dx$, then the solution blows 
up in finite time. In the same spirit as \cite{PS, Sattiinger68}, we establish the 
global existence and blow-up of potential well solutions for the 
structural acoustic model \eqref{PDE}. We remark that the literature on potential well solutions for nonlinear wave equations typically focuses on power-type source terms, which are homogeneous under scaling.
However, our system includes logarithmic source terms that are not homogeneous under scaling. This non-homogeneity causes difficulties in employing the potential well theory and constitutes a novel aspect of this work.

The acoustic pressure term $u_t|_{\Gamma}$ is the key coupling term connecting the wave equation and the plate equation in system \eqref{PDE}, 
but the lack of sufficient regularity of this term causes technical difficulties in the proof of the blow-up theorem. In our previous paper 
\cite{FENG-JMAA} on a structural acoustic model with damping and power-type source terms, we replaced the d'Alembertian 
$u_{tt} - \Delta u$ with the Klein-Gordon operator 
$u_{tt} - \Delta u + u$ to overcome the technical difficulty in the proof of the blow-up result. In this paper, we improve our argument and establish the blow-up of weak solutions without altering the d'Alembertian. 
In particular, the nonlinear damping term $|w_t|^{r-1}w_t$ plays a crucial role in treating the acoustic pressure term $u_t|_{\Gamma}$ in the proof of the blow-up results.

We emphasize that the stabilization estimate (\ref{4-12}) in Lemma \ref{lem4-1} does not contain lower-order terms, such as $\|u\|_2^2 + |w|_2^2$, on the right-hand side of the inequality. This feature greatly shortens the proof, as it completely avoids the traditional compactness-uniqueness argument (see, e.g., \cite{GR2}) that is otherwise needed to absorb lower-order terms. This type of refined estimate was originally introduced in \cite{GRS3} by Guo et al.\ for the same purpose of eliminating lower-order terms.

The paper is organized as follows. Section \ref{sec-Main} provides statements of all main results. Section \ref{sec-Local} establishes the local well-posedness of weak solutions, as well as global existence when the damping terms are stronger than the source terms. In Section \ref{sec-Well}, we prove the global existence of potential well solutions. 
In Section \ref{sec-decay}, we establish exponential and polynomial energy decay rates, depending on the powers of the damping terms. 
In Section \ref{sec-Blow}, we prove finite-time blow-up of weak solutions when the source terms are stronger than the damping terms, in the cases of both negative and positive initial energy.

\vspace{0.1 in}

\section{Main Results} \label{sec-Main}

This section is devoted to stating the main results of this paper. Before presenting these results, we first introduce some notation used throughout the paper.

\subsection{Notation}

Throughout the paper, the following notation for $L^p$ space 
norms and inner products is used:
\begin{align*}
&\|u\|_p = \|u\|_{L^p(\Omega)}, 
&&(u,v)_\Omega = (u,v)_{L^2(\Omega)}, \\
&|u|_p = \|u\|_{L^p(\Gamma)}, 
&&(u,v)_\Gamma = (u,v)_{L^2(\Gamma)}.
\end{align*}
Note that double bars $\|\cdot\|_p$ denote norms on $\Omega$, 
while single bars $|\cdot|_p$ denote norms on $\Gamma$.
We write $\gamma u$ (or equivalently $u|_{\Gamma}$) to denote 
the trace of $u$ on $\Gamma$.
Throughout, $C > 0$ denotes a generic positive constant that 
may differ from line to line.

In addition, we define
\begin{align*} 
\H(\O):= \{ u \in H^1(\Omega) : u|_{\Gamma_0} = 0 \}.
\end{align*}

By the Poincar\'{e} inequality, the norm  $\|\nabla u\|_2$ is equivalent to the standard $H^1$ norm on $\H(\O)$.  
We therefore equip $\H(\O)$ with the norm and 
inner product:
\begin{align*} 
\|u\|_{\H(\O)} = \|\nabla u\|_2, 
\qquad 
(u,v)_{\H(\O)} = (\nabla u, \nabla v)_\Omega.
\end{align*}
Analogously, by the Poincar\'{e} inequality on $\Gamma$, 
we equip $H^2_0(\Gamma)$ with the norm and inner product:
\begin{align*} 
\|w\|_{H^2_0(\Gamma)} = |\Delta w|_2, 
\qquad 
(w,z)_{H^2_0(\Gamma)} = (\Delta w, \Delta z)_\Gamma.
\end{align*}
Let $Y$ be a Banach space. We denote the duality pairing 
between the dual space $Y'$ and $Y$ by 
$\langle \psi, y \rangle_{Y',Y}$, or simply by 
$\langle \cdot, \cdot \rangle$. That is,
\begin{align*}
\langle \psi, y \rangle = \psi(y) 
\quad \text{for } y \in Y,\, \psi \in Y'.
\end{align*}

\vspace{0.1 in}

\subsection{Local and Global Well-Posedness of Weak Solutions}
In this subsection, we present results on the local well-posedness of weak solutions, as well as the global existence of weak solutions when the damping terms are stronger than the source terms.

To state these results, we first introduce the notion of a weak solution for system \eqref{PDE}, assuming the parameters \(m\), \(r\), and \(p\) satisfy \eqref{p}.

\begin{defn}\label{def:weaksln}
We call $(u,w)$   a \emph{weak solution} of \eqref{PDE} on the interval $[0,T]$ if
    \begin{enumerate}[(i)]
        \setlength{\itemsep}{5pt}
        \item $u\in C([0,T];H^1_{\G_0}(\O))$, $u_t\in C([0,T];L^2(\O))\cap L^{m+1}(\O\times(0,T))$,
        \item $w\in C([0,T];H^2_0(\G))$, $w_t\in C([0,T];L^2(\G))\cap L^{r+1}(\G\times(0,T))$,
        \item $(u(0),u_t(0))=(u_0,u_1) \in H^1_{\G_0}(\O)\times L^2(\O)$,
        \item $(w(0),w_t(0))=(w_0,w_1) \in H^2_0(\G)\times L^2(\G)$,
        \item The functions $u$ and $w$ satisfy the following weak formulation for all $t\in[0,T]$:
\begin{align}\label{wkslnwave}
(u_{t}(t),  \phi(t))_\O & - (u_1,\phi(0))_\O-\int_0^t ( u_t, \phi_t)_\O \, ds
+\int_0^t (\nabla u, \nabla\phi )_\O \, ds \notag \\
&-\int_0^t  (w_t, \g \phi )_\G \, ds
+ \int_0^t \int_{\Omega}  |u_t|^{m-1} u_t \phi \, dx ds  \notag\\
&= \int_0^t\int_\Omega (|u|^{p-1} u \log |u|) \phi \, dx ds,
\end{align}
\begin{align}\label{wkslnplt}
&(w_t(t)  + \g u(t),\psi(t) )_\G  -(w_1 +\g u_0 ,\psi(0))_\G -\int_0^t (w_t, \psi_t )_\G \, ds \notag \\
&\quad -\int_0^t (\g u, \psi_t)_\G \, ds + \int_0^t (\Delta w, \Delta\psi)_\G \, ds \notag \\
&\quad+ \int_0^t \int_{\Gamma}  |w_t|^{r-1} w_t \psi \, d\Gamma  ds = \int_0^t \int_{\G} (|w|^{p-1} w\log |w|) \psi \, d\G ds,
\end{align}
for all test functions $\phi$ and $\psi$ satisfying
$\phi\in C([0,T];H^1_{\G_0}(\O))  \cap L^{m+1}(\O\times(0,T))$, $\psi\in C\left([0,T];H^2_0(\G)\right)$  with
$\phi_t\in L^1(0,T;L^2(\O))$, and $\psi_t\in L^{1}(0,T;L^2(\G))$.
    \end{enumerate}
\end{defn}

\vspace{0.1 in}

Next, we define the \emph{quadratic energy} $E(t)$ by 
\begin{align}\label{3-9-1}
E(t)=\frac{1}{2}\left(\|u_t(t)\|_2^2+|w_t(t)|_2^2   +\|\nabla u(t)\|_2^2   +|\Delta w(t)|_2^2\right).
\end{align}

The following result establishes the local well-posedness of weak solutions.
\begin{thm} [{\bf Local well-posedness of weak solutions}]  \label{t:1}
 Assume the parameters \( m \), \( r \) and \( p \) satisfy \eqref{p}. Then the following statements hold:
\begin{enumerate}[(i)]
\item  Assume the initial data
$(u_0, w_0, u_1, w_1) \in \mathcal{H}$ where the space $\mathcal H$ is defined as 
$$\mathcal{H} = H^1_{\Gamma_0}(\Omega) \times H^2_0(\Gamma)  \times L^2(\Omega) \times L^2(\Gamma).$$
Then system  \eqref{PDE} has a local weak solution $(u,w)$ defined on $[0,T_0]$ for some $T_0>0$ depending on the initial quadratic energy $E(0)$. In addition, the following energy identity holds for all $t\in[0,T_0]$:
	\begin{align}\label{energy}
	&E(t) + \int_0^t\int_{\Omega} |u_t|^{m+1} dx ds +  \int_0^t\int_\G  |w_t|^{r+1} d\G ds	 \notag\\
	&=E(0)+     \int_0^t\int_{\Omega}   (|u|^{p-1} u \log |u|)     u_t \, dx ds            
    +       \int_0^t\int_\G     (|w|^{p-1} w \log |w|)    w_t      \,   d\G ds.
	\end{align}
\item Assume the initial data $U_0= (u_0, w_0, u_1, w_1) \in Y$ where the space $Y$ is defined as
$$ Y  =\left(H^1_{\Gamma_0}(\Omega) \cap L^{\frac{3(p-1)}{2}}(\Omega)\right) 
    \times H^2_0(\Gamma) \times L^2(\Omega) \times L^2(\Gamma).$$
Then weak solutions are unique. 
\item If $U_0^n = (u_0^n, w_0^n, u_1^n, w_1^n)$ is a sequence of initial data such 
that, as $n \longrightarrow \infty$,
\begin{equation*}
    U_0^n \longrightarrow U_0 \quad \text{in } Y,
\end{equation*}
then the corresponding weak solutions $(u^n, w^n)$ and $(u, w)$ of \eqref{PDE} 
satisfy:
\begin{equation*}
    \left(u^n, w^n, u^n_t, w^n_t\right) \longrightarrow (u, w, u_t, w_t) 
    \quad \text{in } C([0,T];\, \mathcal{H}), 
    \quad \text{as } n \longrightarrow \infty.
\end{equation*}
\end{enumerate}
  \end{thm}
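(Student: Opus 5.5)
We follow the monotone-operator/truncation strategy of Bociu--Lasiecka and of Becklin--Rammaha for supercritical sources, adapted to the wave--plate coupling and to the logarithmic sources. Write $f(s)=|s|^{p-1}s\log|s|$. The basic input is a pointwise bound valid for every $\epsilon>0$:
\begin{align*}
|f'(s)|\le C_\epsilon\bigl(1+|s|^{p-1+\epsilon}\bigr).
\end{align*}
Since $1<p<6$ and $p\frac{m+1}{m}<6$ are strict inequalities, we may fix $\epsilon$ so small that $\tilde p=p+\epsilon$ still satisfies \eqref{p}. Every estimate for the power source $|u|^{p-1}u$ then goes through with $\tilde p$ in place of $p$. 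On $\Gamma$, which is two dimensional, $H^2_0(\Gamma)\hookrightarrow L^\infty(\Gamma)$, so the plate source is locally Lipschitz and harmless.

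\textbf{Part (i).} We write \eqref{PDE} as $U_t+\mathcal A U+\mathcal D(U)=\mathcal F(U)$ on $\mathcal H$. Here $\mathcal A$ is the linear coupled wave--plate operator with domain encoding $\partial_\nu u=w_t$, and it is skew-adjoint for the energy inner product: the coupling terms $\int_\Gamma w_t\gamma u_t$ cancel. $\mathcal D$ collects the monotone damping terms.

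\emph{Step 1.} Truncate the sources: $f_K(u)=f(\eta_K(u))$ with $\eta_K$ a smooth cutoff in $\|\nabla u\|_2$. Then $\mathcal F_K$ is globally Lipschitz on $\mathcal H$, by the bound above and the Sobolev embedding $H^1\hookrightarrow L^6$, in the subcritical case. In the supercritical case we first regularize with a Lipschitz approximation $f_n$. In either case Kato/Brezis theory for Lipschitz perturbations of m-accretive operators gives a unique global strong solution for smooth data.

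\emph{Step 2.} Derive energy estimates uniform in $n$. We bound
\begin{align*}
\int_\Omega |f_n(u)u_t| \le C\,\|u\|_{\frac{\tilde p(m+1)}{m}}^{\tilde p}\,\|u_t\|_{m+1},
\end{align*}
and the condition $\tilde p\frac{m+1}{m}\le 6$ lets us absorb $\|u_t\|_{m+1}$ into the damping by Young's inequality. This yields $E(t)+\int_0^t\|u_t\|_{m+1}^{m+1}\le C(E(0))$ on $[0,T_0]$, with $T_0$ depending only on $E(0)$; choosing $T_0$ small also keeps the truncation inactive.

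\emph{Step 3.} Pass to the limit using Aubin--Lions compactness in $C([0,T];H^{1-\delta})$ and weak-star limits. The damping term is identified by the monotonicity (Minty) argument, and the source by strong convergence plus the growth bound.

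\emph{Step 4.} Prove the energy identity \eqref{energy}. This is the delicate part, because $u_t$ is not an admissible test function. We test with Steklov/time-mollified $u_t$, handling the boundary pairing $\gamma u_t$ by integration by parts in time as in Definition~\ref{def:weaksln}, and pass to the limit using $u_t\in L^{m+1}$ and $f(u)\in L^{\frac{m+1}{m}}$.

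\textbf{Part (ii): uniqueness.} Take the difference $(y,z)=(u-\hat u,\,w-\hat w)$ and use the energy identity for the difference, obtained again via time regularization. The damping differences are nonnegative by monotonicity, and the coupling terms cancel. The plate source difference is Lipschitz. The main obstacle is the wave source difference:
\begin{align*}
\int_\Omega \bigl(f(u)-f(\hat u)\bigr)y_t \le C\int_\Omega\bigl(1+|u|^{\tilde p-1}+|\hat u|^{\tilde p-1}\bigr)|y|\,|y_t|.
\end{align*}
In the supercritical range this cannot be bounded by $\|\nabla y\|_2\|y_t\|_2$ for $H^1$ data, which is why we assume $u_0\in L^{3(p-1)/2}$. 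Following Bociu--Lasiecka, we first propagate the extra integrability: we show $u\in L^\infty(0,T;L^{3(\tilde p-1)/2})$ via the damping, by testing with $|u|^{q}u$-type multipliers, or we integrate by parts in time to move the derivative onto $f'(u)u_t\,y^2$. Hölder's inequality with $L^{3(\tilde p-1)/2}\cdot L^6\cdot L^2$ then gives the bound $C\|\nabla y\|_2\|y_t\|_2$, and Gronwall's inequality finishes the argument. The $\epsilon$-loss from the logarithm is absorbed because $\tilde p$ can be taken arbitrarily close to $p$ while the strict inequalities survive. This is the step we expect to be hardest, and we would first try the time integration-by-parts route.

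\textbf{Part (iii): continuous dependence.} Repeat the difference estimate for $(u^n-u,\,w^n-w)$. The same Gronwall argument gives $\sup_{t\in[0,T]}$ of the difference energy bounded by $C$ times the difference of the initial data in $Y$. The constant $C$ is uniform because $\|U_0^n\|_Y$ is bounded, which keeps the uniform existence time and the uniform $L^{3(p-1)/2}$ bounds. This yields convergence in $C([0,T];\mathcal H)$.
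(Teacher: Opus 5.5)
Your plan is, in substance, the paper's. The paper does not redo the construction; it uses \eqref{ineq-log} to show that $f(s)=|s|^{p-1}s\log|s|$ satisfies the hypotheses of Becklin--Rammaha with $p_1=p+\sigma$. These are the $f'$ bound \eqref{propf-1} for existence, and the $C^2$ bound \eqref{propf-2} for uniqueness and continuous dependence. Your Steps 1--4 reproduce the cited existence argument and are fine as a sketch.

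\textbf{The gap is in Part (ii).} Your key estimate does not close. If $u(t)\in L^{3(\tilde p-1)/2}$, then $|u|^{\tilde p-1}\in L^{3/2}$, and $\frac23+\frac16+\frac12=\frac43>1$. Bounding $\int_\Omega|u|^{\tilde p-1}|y||y_t|\,dx$ by $C\|\nabla y\|_2\|y_t\|_2$ would need $u\in L^{3(\tilde p-1)}$, and nothing propagates that. The mechanism that works is integration by parts in time. Set $K=\int_0^1 f'(\hat u+\tau y)\,d\tau$; since $y(0)=0$,
\begin{align*}
\int_0^t\!\!\int_\Omega \bigl(f(u)-f(\hat u)\bigr)y_t\,dx\,ds=\frac12\int_\Omega K(t)\,y(t)^2\,dx-\frac12\int_0^t\!\!\int_\Omega K_t\,y^2\,dx\,ds,
\end{align*}
with $K_t=\int_0^1 f''(\hat u+\tau y)(\hat u_t+\tau y_t)\,d\tau$. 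The time derivative lands on $f'$ and produces $f''$, not ``$f'(u)u_t$''. So you need $f\in C^2$ with $|f''(s)|\le C(1+|s|^{\tilde p-2})$ whenever $\tilde p\ge 3$. You never state or check this bound, and it is exactly the second half of what the paper verifies. With it, the interior term is estimated in $L^{6/(\tilde p-2)}\cdot L^{m+1}\cdot L^3$. This uses $\tilde p\frac{m+1}{m}\le 6$ a second time and gives Gronwall with the weight $1+\|u_t\|_{m+1}+\|\hat u_t\|_{m+1}\in L^1(0,T)$.

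\textbf{Where the extra integrability enters, and the $\epsilon$-loss.} Extra integrability of $u$ is used only in the endpoint term $\int_\Omega K(t)y(t)^2\,dx$. There you pair $L^{3/2}$ with $L^3$ and absorb, splitting $K(t)$ into a bounded part and a part small in $L^{3/2}$ uniformly in $t$. This integrability is automatic from $H^1$ when $\tilde p\le 5$. Otherwise it is propagated simply by $u(t)=u_0+\int_0^t u_t\,ds$, using $u_t\in L^{m+1}(\Omega\times(0,T))$ and $m+1\ge\frac{3(\tilde p-1)}{2}$, which follows from $\tilde p\frac{m+1}{m}<6$ when $\tilde p\ge 5$. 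Multipliers $|u|^qu$ are not the mechanism; they give no $L^q$ control for this second-order-in-time equation.

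Consequently, your claim that the $\epsilon$-loss is absorbed by the strict inequalities in \eqref{p} fails at the level of the data. Strictness gives room for the $\int_0^t u_t$ part. But for $p\ge 5$ the endpoint term needs $u_0\in L^{3(\tilde p-1)/2}(\Omega)$, and $H^1_{\Gamma_0}(\Omega)\cap L^{3(p-1)/2}(\Omega)$ is not contained in that space. So even the corrected argument yields uniqueness only for slightly more integrable $u_0$. The paper's reduction, which applies Becklin--Rammaha with $p_1=p+\sigma$, formally yields the same condition $u_0\in L^{3(p_1-1)/2}$, so this range needs care in either route.

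\textbf{Part (iii).} The same structure applies. Uniform control comes from convergence, hence precompactness, of the data in the relevant Lebesgue space, which gives uniform integrability of the endpoint coefficients. Mere boundedness of $\|U_0^n\|_Y$ is not enough.
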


We remark that the set $H^1_{\Gamma_0}(\Omega) \cap L^{\frac{3(p-1)}{2}}(\Omega)$ coincides with $H^1_{\Gamma_0}(\Omega)$
if $1<p\leq 5$ because of the embedding $H^1_{\Gamma_0}(\Omega) \hookrightarrow L^6(\Omega)$.
If $p>5$, then $H^1_{\Gamma_0}(\Omega) \cap L^{\frac{3(p-1)}{2}}(\Omega)$ is a proper subset of $H^1_{\Gamma_0}(\Omega)$.

The next result concerns the global existence of weak solutions. More precisely, if the damping terms are stronger than the source terms, then weak solutions exist globally in time.  
\begin{thm} [{\bf Global weak solutions}]  \label{t:2}
Assume the parameters \( m \), \( r \) and \( p \) satisfy \eqref{p}. Let $(u_0, w_0, u_1, w_1) \in \mathcal{H}$ with $u_0 \in L^{p+1+\sigma}(\Omega)$, where $\sigma>0$. If $p< \min\{m,r\}$, then
the solution $(u,w)$ obtained in Theorem \ref{t:1} is  global in time, and $T_0$ can be taken arbitrarily large.
\end{thm}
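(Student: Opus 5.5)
The plan is a continuation argument. By Theorem~\ref{t:1}(i) the local existence time $T_0$ depends only on the initial quadratic energy. It therefore suffices to show that, for every $T>0$, $E(t)$ stays bounded on $[0,T]\cap[0,T_0)$ by a constant $C(T)$ that depends only on the data. Restarting the local solution at times close to $T_0$ then extends it to $[0,T]$, and since $T$ is arbitrary the solution is global. The a priori bound will come from the energy identity \eqref{energy}, with the damping integrals used to absorb the source integrals.

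The key step is to estimate the source terms in \eqref{energy}. Since $p<m$, we can fix $\epsilon>0$ with $p+\epsilon<m$. Then $\big||s|^{p-1}s\log|s|\big|\le C(1+|s|^{p+\epsilon})$, and the same bound holds on $\Gamma$ with $p+\epsilon<r$. By Young's inequality,
\[
\int_0^t\int_\Omega |u|^{p}|\log|u||\,|u_t|\,dxds\le \tfrac12\int_0^t\|u_t\|_{m+1}^{m+1}ds + C\int_0^t\int_\Omega (1+|u|^{(p+\epsilon)\frac{m+1}{m}})\,dxds .
\]
The new term is not controlled by $E$ when $(p+\epsilon)\frac{m+1}{m}>6$. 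To handle it, write $u(t)=u_0+\int_0^t u_t\,ds$ and set $q=(p+\epsilon)\frac{m+1}{m}$. Since $p+\epsilon<m$, we have $q<m+1$, and we may shrink $\epsilon$ so that $q\le p+1+\sigma$. Then
\[
\|u(t)\|_q^q\le C\|u_0\|_q^q + C t^{q-1}\int_0^t\|u_t\|_q^q\,ds ,
\]
and $\|u_t\|_q^q\le C(1+\|u_t\|_{m+1}^{m+1})$ because $\Omega$ is bounded. This is where the hypothesis $u_0\in L^{p+1+\sigma}(\Omega)$ enters. The resulting term $C t^{q-1}\int_0^t\int_0^s\|u_t\|_{m+1}^{m+1}d\tau ds$ does not carry a small coefficient, so it cannot simply be absorbed. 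I would use a quantity that accumulates the dissipation, namely $\Phi(t)=E(t)+\tfrac12\int_0^t\|u_t\|_{m+1}^{m+1}+\tfrac12\int_0^t|w_t|_{r+1}^{r+1}$. The double integral is then bounded by $C T^{q-1}\int_0^t\Phi(s)\,ds$.

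Alternatively, and perhaps more cleanly, I would bound $\int_\Omega |u|^q$ by interpolation between $L^6$, which is controlled by $E$, and $L^{m+1}$ via the time-integral representation. The boundary terms are easier. On $\Gamma\subset\mathbb{R}^2$ we have $H^2_0(\Gamma)\hookrightarrow L^\infty(\Gamma)$, so
\[
\int_\Gamma |w|^{p}|\log|w||\,|w_t|\le C(1+E)^{(p+\epsilon)/2}|w_t|_1 .
\]
Young's inequality with $r>p$ absorbs $|w_t|_{r+1}^{r+1}$ and leaves $C(1+E)^{\frac{(p+\epsilon)(r+1)}{2r}}$, which is superlinear in $E$. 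To avoid this superlinearity I would use instead the same time-integral trick, $w(t)=w_0+\int_0^t w_t$, for the $L^{q'}(\Gamma)$ norm of $w$ with $q'=(p+\epsilon)\frac{r+1}{r}<r+1$. Collecting everything gives $\Phi(t)\le C_T\big(1+\Phi(0)+\|u_0\|_{p+1+\sigma}^{q}\big)+C_T\int_0^t\Phi(s)\,ds$, and Gronwall's inequality yields the bound on $[0,T]$.

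The main obstacle is this exponent bookkeeping. The exponent $q$ must satisfy $q<m+1$, which follows from $p<m$, and must also be controlled by the integrability of $u_0$ through the choice of $\sigma$ and $\epsilon$. The estimate has to remain linear in $\Phi$ so that Gronwall's inequality closes, and the logarithm is handled by the $\epsilon$-loss.
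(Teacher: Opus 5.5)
Your argument is correct, but it takes a different route from the paper.

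The paper proves no estimate of its own. It uses Proposition~\ref{prop-f} to check that $f(s)=|s|^{p-1}s\log|s|$ satisfies $|f'(s)|\le C(|s|^{p_1-1}+1)$ with $p_1=p+\sigma$, where $\sigma$ is taken small so that $p_1\le\min\{m,r\}$ and $p_1\frac{m+1}{m}<6$. It then quotes Theorem~1.5 of Becklin--Rammaha. The hypothesis $u_0\in L^{p+1+\sigma}(\Omega)$ is simply that theorem's hypothesis $u_0\in L^{p_1+1}(\Omega)$.

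You instead prove the a priori bound directly. Your $\epsilon$-loss is the same device as in Proposition~\ref{prop-f}. Young's inequality against the damping leaves $\int|u|^q$ with $q=(p+\epsilon)\frac{m+1}{m}<m+1$. The representation $u(t)=u_0+\int_0^t u_t$, together with Gronwall for $\Phi$, keeps the estimate linear. The boundary term is handled the same way with $q'=(p+\epsilon)\frac{r+1}{r}<r+1$.

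Two remarks on your bookkeeping. First, the case $q>6$ that you worry about never arises, because \eqref{p} gives $q<6$ once $\epsilon$ is small. The time-integral trick is needed only because $\|u\|_q^q\le CE^{q/2}$ is superlinear, not because the term is uncontrolled. Second, for the same reason $\|u_0\|_q\le C\|\nabla u_0\|_2$, so your proof never actually uses $u_0\in L^{p+1+\sigma}(\Omega)$. It gives global existence for all data in $\mathcal{H}$ whenever $p<\min\{m,r\}$. This is a real gain when $p\ge 5$, which under these hypotheses is possible only if $m>5$, since then $L^{p+1+\sigma}$ is a proper restriction on $H^1_{\Gamma_0}(\Omega)$.

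The citation buys brevity and ready-made continuation machinery. Your self-contained route should state explicitly one standard but necessary point: concatenating local solutions from Theorem~\ref{t:1} gives a weak solution on the longer interval that satisfies \eqref{energy} from time $0$.
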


We remark that the condition $u_0 \in L^{p+1+\sigma}(\Omega)$, where $\sigma>0$, means that $u_0$ has slightly stronger integrability than an $L^{p+1}(\Omega)$ function requires.

\vspace{0.1 in}

\subsection{Potential Well Solutions: Global Existence and Energy Decay Rates}

In this subsection, we present results on the global existence of potential well solutions and their energy decay rates. Before stating these results, we introduce the concepts of the potential well and the Nehari manifold.

Let $X=H^1_{\Gamma_0}(\Omega)\times H^2_0(\Gamma)$, endowed with the norm:  
\begin{align}    \label{normX}
\norm{(u,w)}_X  =   (\norm{\nabla u}_2^2 +  |\Delta w|_2^2)^{1/2}.
\end{align}
Throughout this subsection, we assume $1<p<5$. The functional $J:X\to \mathbb{R}$ is defined by
\begin{align}\label{3-8}
J(u,w)&:=\frac{1}{2}(\|\nabla u\|^2_2 + |\Delta w|^2_2) - \frac{1}{p+1} \left(\int_\Omega |u|^{p+1}\log|u|dx + \int_\Gamma |w|^{p+1}\log |w|d\Gamma\right)\notag\\
&\quad+ \frac{1}{(p+1)^2} \left(\|u\|^{p+1}_{p+1} +  |w|^{p+1}_{p+1} \right).
\end{align}
By the Sobolev embeddings $H^1_{\Gamma_0}(\Omega) \hookrightarrow L^6(\Omega)$ and $H^2_0(\Gamma) \hookrightarrow L^{\infty}(\Gamma)$, 
together with the assumption $1<p<5$, the functional $J(u,w)$ is well-defined and $J: X \rightarrow \mathbb R$ is continuous.

The \emph{potential energy} of the system is given by $ J(u(t),w(t))$. 
The Fr\'echet derivative of the functional $J$ at $(u,w) \in X$ is given by 
\begin{align} \label{J'}
\langle J'(u,w), (\phi, \psi) \rangle = & 
\int_{\Omega} \nabla u \cdot \nabla \phi \, dx + \int_{\Gamma} \Delta w \cdot \Delta \psi \, d\Gamma  \notag\\
& - \int_{\Omega} (|u|^{p-1} u \log |u|) \phi \, dx - \int_{\Gamma} (|w|^{p-1} w \log |w|) \psi \, d\Gamma,
\end{align}
for $(\phi,\psi) \in X$. 

The \emph{Nehari manifold} $\mathcal N$ associated with the functional $J$ is defined as  
\begin{align} \label{neha} 
\mathcal N := \{ (u,w) \in X \backslash \{(0,0)\}:  \langle  J'(u,w), (u,w)   \rangle =0 \}.
\end{align}
We refer the reader to the original works of Nehari \cite{nehari1960class, nehari1961characteristic}. The Nehari functional $I$ is defined by
\begin{align}\label{3-9-0}
I(u,w)=\|\nabla u\|^2_2+|\Delta w|^2_2-\int_\Omega |u|^{p+1} \log |u| dx-\int_\Gamma |w|^{p+1} \log |w| d\Gamma.
\end{align}
It follows from \eqref{J'} and \eqref{3-9-0} that the Nehari manifold \eqref{neha} can be written as
\begin{align}   \label{Nehari}
\mathcal{N}=\left\{(u,w)\in X\backslash \{(0,0)\}: I(u,w)=0 \right\}.
\end{align}

We define the \emph{potential well} associated with the potential energy $J(u,w)$ by 
\begin{align}    \label{def-W}
W&:=\{(u,w)\in X: J(u,w)<d\}.
\end{align}
Here, $d$ denotes the \emph{depth of the potential well} $W$, defined by
\begin{align}    \label{depth}
d:=\inf_{(u,w)\in\mathcal{N}}J(u,w).
\end{align} 

The following proposition shows that $d$ is strictly positive.
\begin{prop}\label{lem3-1}
Let $1<p<5$. Then the constant $d$ defined in \eqref{depth} is strictly positive.
\end{prop}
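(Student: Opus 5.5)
Our plan is to show that every point of $\mathcal N$ has $X$-norm bounded below by a fixed positive constant, and that on $\mathcal N$ the functional $J$ controls a positive multiple of $\norm{(u,w)}_X^2$.

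\textbf{Step 1: rewrite $J$ on $\mathcal N$.} Comparing \eqref{3-8} with \eqref{3-9-0}, we have for every $(u,w)\in X$
\begin{align*}
J(u,w)=\frac{1}{p+1}I(u,w)+\Big(\frac12-\frac1{p+1}\Big)\norm{(u,w)}_X^2+\frac{1}{(p+1)^2}\left(\|u\|_{p+1}^{p+1}+|w|_{p+1}^{p+1}\right).
\end{align*}
Hence on $\mathcal N$ we get $J(u,w)\ge \frac{p-1}{2(p+1)}\norm{(u,w)}_X^2$. It therefore suffices to find $\rho>0$ with $\norm{(u,w)}_X\ge\rho$ for all $(u,w)\in\mathcal N$. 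Then $d\ge \frac{p-1}{2(p+1)}\rho^2>0$.

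\textbf{Step 2: lower bound on the norm over $\mathcal N$.} If $(u,w)\in\mathcal N$, then
\begin{align*}
\norm{(u,w)}_X^2=\int_\Omega |u|^{p+1}\log|u|\,dx+\int_\Gamma|w|^{p+1}\log|w|\,d\Gamma .
\end{align*}
Where $|s|\le 1$ the integrand is nonpositive, so only the region $|s|>1$ contributes. Fix $\epsilon>0$ with $p+1+\epsilon\le 6$, which is possible since $p<5$. There is $C_\epsilon$ with $\log s\le C_\epsilon s^\epsilon$ for $s>1$. Thus
\begin{align*}
\norm{(u,w)}_X^2\le C_\epsilon\left(\|u\|_{p+1+\epsilon}^{p+1+\epsilon}+|w|_{p+1+\epsilon}^{p+1+\epsilon}\right).
\end{align*}
By the embeddings $H^1_{\Gamma_0}(\Omega)\hookrightarrow L^6(\Omega)$ (with $\Omega$ bounded) and $H^2_0(\Gamma)\hookrightarrow L^\infty(\Gamma)$, the right-hand side is at most $C\norm{(u,w)}_X^{p+1+\epsilon}$. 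Since $(u,w)\neq(0,0)$, dividing gives
\begin{align*}
\norm{(u,w)}_X^{p-1+\epsilon}\ge 1/C,
\end{align*}
so $\rho=C^{-1/(p-1+\epsilon)}$ works.

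\textbf{Main obstacle.} The only delicate point is the non-homogeneous logarithm, which has no scaling to exploit. Splitting into $|s|\le1$ and $|s|>1$ and using the bound $\log s\le C_\epsilon s^\epsilon$ handles it, at the cost of the small loss $\epsilon$. This loss is admissible exactly because $p<5$ leaves room below the Sobolev exponent $6$.

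Nonemptiness of $\mathcal N$ is not needed: if $\mathcal N$ were empty, then $d=+\infty>0$. For completeness, nonemptiness follows by taking $(\lambda u,\lambda w)$ with $\lambda$ large for any nonzero pair, since $I(\lambda u,\lambda w)<0$ for large $\lambda$ and $I>0$ for small $\lambda$.
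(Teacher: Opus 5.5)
Your proof is correct and follows essentially the same route as the paper. On $\mathcal N$ you bound $J$ below by $\bigl(\frac12-\frac1{p+1}\bigr)\|(u,w)\|_X^2$, then use the Nehari identity, discard the region where the integrands are nonpositive, absorb the logarithm via $\log s\le (e\epsilon)^{-1}s^{\epsilon}$ for $s>1$, and apply the Sobolev embeddings with $p+1+\epsilon\le 6$ to obtain a uniform positive lower bound on $\|(u,w)\|_X$.
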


Since $J(0,0) =0$ and $d>0$, we have $(0,0) \in W$. Furthermore, since $J$ is continuous, a neighborhood of $(0,0)$ in $X$ is contained in $W$. 
It follows from (\ref{def-W}) and (\ref{depth}) that the potential well $W$ and the Nehari manifold $\mathcal N$ are disjoint, i.e.,
\begin{align}   \label{disjointWN}
W \cap \mathcal N =   \emptyset.
\end{align}

We decompose the potential well $W$ into two regions:  
\begin{align}
W_1&=\left\{(u,w)\in W:I(u,w)>0\right\} \cup\{(0,0)\}, \label{defW1}\\
W_2&=\left\{(u,w)\in W:I(u,w)<0\right\}. \label{defW2}
\end{align}
Clearly, $W_1\cap W_2=\emptyset$. 
In addition, by \eqref{disjointWN}, it follows that 
$$W_1\cup W_2=W.$$
We will show that if the initial data $(u_0,w_0)$ belong to $W_1$ and the initial total energy is less than the depth $d$ of the potential well, 
then the weak solution to system \eqref{PDE} is global in time. 
Moreover, under additional assumptions, the uniform decay rates of energy will be established. 
Thus, we call the set $W_1$ the ``stable" region of $W$. 
However, the solution may blow up in finite time if the initial data belong to $W_2$, as shown in Corollary \ref{cor1}. Hence we call $W_2$ the ``unstable" region.

\begin{prop} \label{prop-W1}
The set $W_1$ defined in (\ref{defW1}) is nontrivial. More precisely, there exists $r>0$ such that 
the ball 
$B_{r}(0,0) = \{(u,w) \in X: \|(u,w)\|_X < r\}$ is contained in $W_1$.  
\end{prop}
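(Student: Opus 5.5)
We need a ball of radius $r$ in $X$ such that every nonzero $(u,w)$ in it satisfies both $J(u,w)<d$ and $I(u,w)>0$; the origin belongs to $W_1$ by definition. The idea is to show that near the origin the quadratic part $\|(u,w)\|_X^2$ dominates the logarithmic terms. These terms are superquadratic once $\log$ is bounded above by a small power.

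\emph{Step 1: bound the logarithmic integrals.} Since $1<p<5$, fix $\varepsilon>0$ with $p+1+\varepsilon\le 6$. Pointwise, $\log s\le \frac{1}{e\varepsilon}s^{\varepsilon}$ for $s>0$. On the set $\{|u|<1\}$ the integrand $|u|^{p+1}\log|u|$ is negative, so we may drop it. Hence
\begin{align*}
\int_\Omega |u|^{p+1}\log|u|\,dx \le \frac{1}{e\varepsilon}\|u\|_{p+1+\varepsilon}^{p+1+\varepsilon}\le C\|\nabla u\|_2^{p+1+\varepsilon},
\end{align*}
using the embedding $H^1_{\Gamma_0}(\Omega)\hookrightarrow L^6(\Omega)$ on a bounded domain. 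In the same way, $H^2_0(\Gamma)\hookrightarrow L^\infty(\Gamma)$ gives $\int_\Gamma |w|^{p+1}\log|w|\,d\Gamma\le C|\Delta w|_2^{p+1+\varepsilon}$.

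\emph{Step 2: positivity of $I$.} Put $\rho=\|(u,w)\|_X$. Step 1 yields
\begin{align*}
I(u,w)\ge \rho^2 - C\rho^{p+1+\varepsilon}=\rho^2\bigl(1-C\rho^{p-1+\varepsilon}\bigr).
\end{align*}
Because $p-1+\varepsilon>0$, this is strictly positive whenever $0<\rho<r_1:=C^{-1/(p-1+\varepsilon)}$.

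\emph{Step 3: $J<d$.} The term $\frac{1}{(p+1)^2}(\|u\|_{p+1}^{p+1}+|w|_{p+1}^{p+1})$ is nonnegative, so it must be bounded above. By the same embeddings it is at most $C\rho^{p+1}$. The term $-\frac{1}{p+1}\int|u|^{p+1}\log|u|$ is bounded above by $\frac{1}{p+1}\int_{\{|u|<1\}}|u|^{p+1}|\log|u||$. Since $s^{p+1}|\log s|\le \frac{1}{e(p+1)}$ on $(0,1)$, this piece is only bounded by a constant and not by a power of $\rho$. It is better to use $s^{p+1}|\log s|\le C_\delta s^{p+1-\delta}$ on $(0,1)$ with $0<\delta<p-1$; then the embeddings bound it by $C\rho^{p+1-\delta}$. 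The piece where $|u|\ge1$ only helps. The same argument applies on $\Gamma$. Altogether, for $\rho\le 1$,
\begin{align*}
J(u,w)\le \tfrac12\rho^2 + C\rho^{p+1-\delta}+C\rho^{p+1}\le \tfrac12\rho^2+C\rho^{2},
\end{align*}
which tends to $0$ as $\rho\to0$. Since $d>0$ by Proposition~\ref{lem3-1}, there is $r_2>0$ with $J<d$ on $B_{r_2}(0,0)$.

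\emph{Conclusion and main obstacle.} Take $r=\min\{r_1,r_2,1\}$. Every nonzero point of $B_r(0,0)$ then satisfies $J<d$ and $I>0$, and so lies in $W_1$. The main obstacle is the sign-changing logarithm. For $I$, only the region $|u|\ge1$ matters, and there $\log$ is controlled by an arbitrarily small power. For $J$, only the region $|u|<1$ matters, and there a small loss of power is still absorbed by the embedding into $L^{p+1-\delta}$. Without these estimates, the non-homogeneity would leave us unable to bound the logarithmic terms by powers of $\rho$.
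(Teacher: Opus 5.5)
Your proof is correct and follows essentially the paper's route. Your Step 2 is the same estimate the paper borrows from the proof that $d>0$: there it shows that every point of $W_2$ satisfies $\|(u,w)\|_X> y_0$, and you simply read it directly as $I>0$ on the punctured ball, which also spares you the partition $W=W_1\cup W_2$. The only real difference is in Step 3. The paper just invokes continuity of $J$ at the origin to get $B_r\subset W$, whereas you make that continuity explicit via the bound $J(u,w)\le C\|(u,w)\|_X^2$ for $\|(u,w)\|_X\le 1$, using $s^{\delta}|\log s|\le (e\delta)^{-1}$ on $(0,1)$. This is a harmless and slightly more self-contained variant.
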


We define the \emph{total energy} $\widehat E(t)$ as the sum of the kinetic energy and the potential energy:
\begin{align}\label{3-9}
\widehat{E}(t): &=   \frac{1}{2}(\|u_t(t)\|^2_2+|w_t(t)|^2_2)+J(u(t),w(t)),
\end{align}
where the functional $J$ was defined in (\ref{3-8}).

The following result shows that if $(u_0,w_0)\in W_1$ and $\widehat{E}(0)<d$, then the weak solution can be extended to a global weak solution.

\begin{thm} [{\bf Global existence of potential well solutions}]
\label{thm3-1}
Assume the parameters \( m \), \( r \) and \( p \) satisfy \eqref{p} and $1< p < 5$. Assume further that
$(u_0,w_0)\in W_1$ and the total energy $\widehat{E}(0)<d$. Then system \eqref{PDE} has a unique global weak solution $(u,w)$. In addition,  for
any $t\geq 0$, we have
\begin{enumerate}[(i)]
\item  $J(u(t),w(t))\leq \widehat{E}(t)\leq\widehat{E}(0)<d$,
\smallskip
\item  $(u(t),w(t))\in W_1$,
\smallskip
\item  $0\leq E(t)\leq \frac{p+1}{p-1} \widehat E(t) < \frac{p+1}{p-1} d$.
\end{enumerate}
\end{thm}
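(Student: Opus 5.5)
We follow the classical Payne--Sattinger scheme, adapted to the non-homogeneous logarithmic nonlinearity. The first step is to rewrite the energy identity \eqref{energy} in terms of the total energy $\widehat E$. Note that
\[
\frac{d}{ds}\Big(\frac{1}{p+1}|s|^{p+1}\log|s|-\frac{1}{(p+1)^2}|s|^{p+1}\Big)=|s|^{p-1}s\log|s|.
\]
Hence, at least for regular solutions, the source terms in \eqref{energy} are exactly the time derivative of the non-quadratic part of $J$. This gives
\[
\widehat E(t)+\int_0^t\!\!\int_\Omega|u_t|^{m+1}\,dx\,ds+\int_0^t\!\!\int_\Gamma|w_t|^{r+1}\,d\Gamma\, ds=\widehat E(0).
\]
For weak solutions on $[0,T_0]$ we justify this by the chain rule for $u\in C([0,T];H^1_{\Gamma_0})$ with $u_t\in C([0,T];L^2)$. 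The justification uses that $1<p<5$ makes $s\mapsto |s|^{p+1}\log|s|$ controlled by $|s|^{p+1+\epsilon}$ with $p+1+\epsilon\le 6$, so these quantities are continuous in time; if needed we approximate by smooth data and pass to the limit. Consequently $\widehat E$ is nonincreasing, and (i) follows since the kinetic energy is nonnegative.

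Next we prove invariance (ii) by a continuity argument. Since $t\mapsto (u(t),w(t))\in X$ is continuous and $J,I$ are continuous on $X$, the map $t\mapsto I(u(t),w(t))$ is continuous, and $J(u(t),w(t))<d$ for all $t$ in the existence interval. Suppose $(u(t),w(t))$ leaves $W_1$. Then there is a first time $t^*$ with either $I(u(t^*),w(t^*))=0$ and $(u(t^*),w(t^*))\neq(0,0)$, which means $(u(t^*),w(t^*))\in\mathcal N$ and hence $J\ge d$, a contradiction by \eqref{depth}; or the trajectory passes through $(0,0)$. In the latter case, by Proposition \ref{prop-W1} a ball $B_r(0,0)$ lies in $W_1$, so $I>0$ or $(u,w)=0$ near the origin, and $I$ cannot become negative without first crossing $\mathcal N$ outside the ball. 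We have to handle this carefully because $(0,0)$ is adjoined to $W_1$ artificially. The cleanest way is to note that $W_2$ is open, and that any path from $W_1$ into $W_2$ inside $W$ must meet $\{I=0\}\setminus\{0\}=\mathcal N$, since near $0$ we have $I\ge0$.

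For (iii) we use the identity
\[
J=\frac{p-1}{2(p+1)}\big(\|\nabla u\|_2^2+|\Delta w|_2^2\big)+\frac{1}{p+1}I+\frac{1}{(p+1)^2}\big(\|u\|_{p+1}^{p+1}+|w|_{p+1}^{p+1}\big).
\]
Since $I\ge0$ on $W_1$, this gives $\|\nabla u\|_2^2+|\Delta w|_2^2\le \frac{2(p+1)}{p-1}J$. Then $E(t)\le \frac12(\|u_t\|_2^2+|w_t|_2^2)+\frac{p+1}{p-1}J\le\frac{p+1}{p-1}\widehat E(t)$, using $\frac{p+1}{p-1}\ge1$. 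Combined with (i), this yields (iii).

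Finally, for global existence: by Theorem \ref{t:1}(i), the local existence time $T_0$ depends only on the quadratic energy $E$. By (iii), $E(t)<\frac{p+1}{p-1}d$ uniformly, so we can restart the solution from any time with a fixed step length and extend it to $[0,\infty)$. Uniqueness follows from Theorem \ref{t:1}(ii), because for $p<5$ the space $Y$ coincides with $\mathcal H$. The main obstacle is the rigorous energy identity for $\widehat E$ at the weak-solution level, in particular the continuity and chain rule for the logarithmic potential terms; the invariance argument near the origin is the secondary delicate point.
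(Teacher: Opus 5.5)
Your proposal is correct and follows essentially the same route as the paper. You get (i) from the energy identity rewritten for $\widehat E$. You get (ii) from continuity of $t\mapsto I(u(t),w(t))$, together with $W\cap\mathcal N=\emptyset$ and the fact that $I\ge 0$ near the origin. The paper phrases this by taking $t^*$ to be the last zero of $I$ before a time in $W_2$, and rules out $(u(t^*),w(t^*))=(0,0)$ via the bound $\|(u,w)\|_X>y_0$ on $W_2$; that is exactly your ball argument. You get (iii) from $I\ge 0$, and global existence and uniqueness from the uniform bound on $E$ and $Y=\mathcal H$.

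One small precision: for $3<p<5$, the chain rule for the logarithmic potential terms needs $u_t\in L^{m+1}(\Omega\times(0,T))$ together with $p\frac{m+1}{m}<6$, so that $f(u)u_t\in L^1$. Having only $u_t\in C([0,T];L^2(\Omega))$ is not enough.
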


\smallskip

\begin{thm} [{\bf Energy decay rates}]\label{thm4-1}
Assume the parameters \( m \), \( r \) and \( p \) satisfy \eqref{p}, with $1\leq m\leq 5$.
Fix an arbitrary $d_0 \in (0,d)$. Assume further that $(u_0,w_0)\in W_1$ and $\widehat{E}(0) \leq d_0 <d$. 
Then the global weak solution of problem \eqref{PDE} established in Theorem \ref{thm3-1} satisfies the following decay rates.
\begin{enumerate}[(i)]
\item If $m=r=1$, then
the total energy $\widehat{E}(t)$  decays to zero exponentially:
\begin{align}\label{exp}
\widehat{E}(t)\leq  C\widehat{E}(0) e^{-at},\ \ \forall\ t\geq0,
\end{align}
where $C$ and $a$ are positive constants independent of the initial data.

\smallskip

\item If at least one of $m$ and $r$ is not equal to 1, then the total energy $\widehat{E}(t)$  decays algebraically:
\begin{align}\label{alg}
\widehat{E}(t)\leq  C(\widehat E(0)) (1+t)^{-\frac{2}{\eta-1}},     \ \ \forall\ t\geq0,    
\end{align}
where $\eta=\max\{m,r\}$ and $C(\widehat E(0))$ is a positive constant depending on $\widehat E(0)$.
\end{enumerate}
\end{thm}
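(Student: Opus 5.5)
The proof follows the standard multiplier/perturbed-energy scheme, with the goal of establishing an integral inequality of the form
\[
\int_S^T \widehat E(t)^{\frac{\eta+1}{2}}\,dt \le C\,\widehat E(S)\quad\text{(or }\int_S^T\widehat E\,dt\le C\widehat E(S)\text{ when } m=r=1),
\]
after which the Komornik/Haraux–Martinez integral inequality yields \eqref{exp} and \eqref{alg}.

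\textbf{Step 1: coercivity facts along the trajectory.} Theorem \ref{thm3-1} gives $(u(t),w(t))\in W_1$ and $\widehat E(t)\le d_0<d$. From these I will extract $E(t)\le C\widehat E(t)$. I also need a quantitative Nehari bound. Using $\widehat E\le d_0$, the Sobolev embeddings, and the elementary estimate $|s|^{p+1}\log|s|\le C_\epsilon|s|^{p+1+\epsilon}$ for $|s|\ge1$ (choosing $\epsilon$ small so that $p+1+\epsilon<6$), I will show
\[
I(u,w)\ge \delta\,\|(u,w)\|_X^2,\qquad \delta=\delta(d_0)>0 .
\]
The argument is that, on the bounded set $\{\|(u,w)\|_X^2\le \tfrac{2(p+1)}{p-1}d_0\}$ intersected with $W_1$, the superquadratic part is controlled by $\theta\|(u,w)\|_X^2$ for some $\theta<1$. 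The log non-homogeneity is exactly why $\delta$ depends on $d_0$, and I expect this point to require care. Since $J\le\frac12\|\cdot\|_X^2+C\|\cdot\|^{p+1}$ on bounded sets, it follows that $\widehat E$, $E$, and $\frac12(\|u_t\|^2+|w_t|^2)+\|(u,w)\|_X^2$ are all comparable.

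\textbf{Step 2: perturbed energy.} Pair \eqref{wkslnwave} with $\phi=u$ and \eqref{wkslnplt} with $\psi=w$, made rigorous via the weak formulation and the integration-by-parts-in-time device. This gives
\[
\int_S^T I(u,w)\,dt=\Big[-(u_t,u)-(w_t+\g u,w)\Big]_S^T+\int_S^T\big(\|u_t\|_2^2+|w_t|_2^2\big)dt+\int_S^T(\g u,w_t)\,dt-\int_S^T\!\!\int(|u_t|^{m-1}u_tu+|w_t|^{r-1}w_tw).
\]
The coupling terms combine: $-(w_t+\g u,w)$ together with $\int(\g u,w_t)$ arises from the $\g u$ term, and I will handle it by noting that $\frac{d}{dt}(\g u,w)$ terms cancel up to the boundary term $(\g u,w)|_S^T$, bounded by $CE$. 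It is important that no term $\int (\g u_t, w)$ survives. This is where the acoustic pressure must be treated carefully: the weak formulation only tests $\g u$ against $\psi_t$. With $\psi=w$, the surviving term is $\int(\g u,w_t)$, which I bound by $\epsilon\|\nabla u\|^2+C_\epsilon|w_t|_2^2$. All boundary terms are $\le C\widehat E(S)$ by monotonicity. Using Step 1 to replace $I$ by $\delta\|\cdot\|_X^2$ and then adding $\int(\|u_t\|^2+|w_t|^2)$ gives
\[
\int_S^T\widehat E\,dt\le C\widehat E(S)+C\int_S^T(\|u_t\|_2^2+|w_t|_2^2)+C\int_S^T\!\!\int\big(|u_t|^m|u|+|w_t|^r|w|\big).
\]
This estimate has no lower-order terms, as the introduction emphasizes.

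\textbf{Step 3: damping terms.} By the energy identity, $\int_S^T(\|u_t\|_{m+1}^{m+1}+|w_t|_{r+1}^{r+1})\le \widehat E(S)$.

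For $m=r=1$, the bound $\|u_t\|^2+|w_t|^2$ is directly dissipation, and $\int u_tu\le\epsilon\|\nabla u\|^2+C_\epsilon\|u_t\|^2$. Absorbing gives $\int_S^\infty\widehat E\le C\widehat E(S)$, hence exponential decay.

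For general $\eta$, I instead use the multipliers $\widehat E^{(\eta-1)/2}u$ and $\widehat E^{(\eta-1)/2}w$, that is, I redo Step 2 with weight $\widehat E(t)^{(\eta-1)/2}$, which is nonincreasing so its derivative has a good sign. The kinetic terms are handled by Hölder and Young:
\[
\widehat E^{\frac{\eta-1}{2}}\|u_t\|_2^2\le \epsilon\widehat E^{\frac{\eta+1}{2}}+C_\epsilon\|u_t\|_{m+1}^{m+1}.
\]
This uses $\|u_t\|_2\le C\|u_t\|_{m+1}$ and the boundedness of $\widehat E$, and the same bound works on $\Gamma$. The nonlinear damping term is handled by
\[
\widehat E^{\frac{\eta-1}{2}}\!\int|u_t|^m|u|\le \widehat E^{\frac{\eta-1}{2}}\|u_t\|_{m+1}^m\|u\|_{m+1}\le \epsilon\widehat E^{\frac{\eta+1}{2}}\cdot C+C_\epsilon\|u_t\|_{m+1}^{m+1}.
\]
This uses $\|u\|_{m+1}\le C\|\nabla u\|_2$, valid because $m\le5$; this is precisely where that hypothesis enters. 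On $\Gamma$ the corresponding step uses $H^2_0(\Gamma)\hookrightarrow L^\infty$.

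This yields $\int_S^\infty\widehat E^{(\eta+1)/2}\le C(\widehat E(0))\,\widehat E(S)$, and Komornik's lemma gives the rate $(1+t)^{-2/(\eta-1)}$. I expect the main obstacle to be the uniform Nehari coercivity in Step 1 together with the clean treatment of the coupling term $\g u$ without $\g u_t$.
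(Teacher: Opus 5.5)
Your overall scheme, weighted multipliers $\widehat E^{(\eta-1)/2}u$, $\widehat E^{(\eta-1)/2}w$ followed by Komornik's integral inequality, is a legitimate alternative to the paper's route. There is, however, a genuine gap exactly where you suspect it: Step 1. Everything downstream requires $I(u(t),w(t))\ge\delta\|(u(t),w(t))\|_X^2$ with $\delta>0$ depending only on $d/d_0$, for \emph{every} $d_0<d$, and the tools you list cannot deliver this. Sobolev embedding plus $|s|^{p+1}\log|s|\le C_\epsilon|s|^{p+1+\epsilon}$ on the ball $\|(u,w)\|_X^2\le R^2:=\frac{2(p+1)}{p-1}d_0$ gives only
\[
\int_\Omega|u|^{p+1}\log|u|\,dx+\int_\Gamma|w|^{p+1}\log|w|\,d\Gamma\le C_\epsilon R^{p-1+\epsilon}\,\|(u,w)\|_X^2 .
\]
Here $C_\epsilon R^{p-1+\epsilon}<1$ holds essentially only for $d_0<\frac{p-1}{2(p+1)}y_0^2$, the Sobolev-based lower bound for $d$ from Proposition \ref{lem3-1}, which lies strictly below $d$. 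In the pure-power case the best Sobolev constant determines $d$ exactly, so this works; with the logarithm it does not. In fact, ``ball $\cap\,W_1$'' is the wrong set altogether, because the ball condition discards the term $\frac{1}{(p+1)^2}(\|u\|_{p+1}^{p+1}+|w|_{p+1}^{p+1})$ of $J$. Take points $\lambda_n(v_n,z_n)$ with $(v_n,z_n)\in\mathcal N$, $J(v_n,z_n)\downarrow d$, and suitable $\lambda_n\uparrow 1$. These lie in $W_1$ and, for $d_0$ close to $d$, in your ball, yet $I/\|\cdot\|_X^2\to 0$ along them. The bound $J(u(t),w(t))\le\widehat E(t)\le d_0$ has to be used directly.

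The missing idea is the ray-scaling argument behind \eqref{E7}. Write $L=\|u\|_{p+1}^{p+1}+|w|_{p+1}^{p+1}$. If $I(u,w)>0$, then $I(\lambda(u,w))$ contains the term $-\lambda^{p+1}\log\lambda\,L$ and tends to $-\infty$, so some $\lambda_0>1$ gives $\lambda_0(u,w)\in\mathcal N$. Then
\[
d\le J(\lambda_0(u,w))\le\lambda_0^{p+1}\Big[\tfrac{p-1}{2(p+1)}\|(u,w)\|_X^2+\tfrac{1}{(p+1)^2}L\Big]\le\lambda_0^{p+1}\widehat E(t)\le\lambda_0^{p+1}d_0,
\]
so $\lambda_0\ge(d/d_0)^{1/(p+1)}$. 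Expanding $0=I(\lambda_0(u,w))$ then yields
\[
I(u,w)=(1-\lambda_0^{1-p})\|(u,w)\|_X^2+\log\lambda_0\,L\ge\big(1-(d/d_0)^{\frac{1-p}{p+1}}\big)\|(u,w)\|_X^2+\tfrac{1}{p+1}\log(d/d_0)\,L,
\]
which controls all of $J$ by $I$.

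With this in place the rest of your plan closes, and your Young-inequality bookkeeping for the weighted kinetic, damping and coupling terms is correct. Two small corrections: the derivative of the weight is harmless not because of its sign, since $Q$ has none, but because $|Q|\le C\widehat E$ and $\int_S^T|\widehat E'|\,dt\le\widehat E(S)$. Also, the coupling term enters as $2\int(\gamma u,w_t)_\Gamma$, which is immaterial.

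The paper instead proves the unweighted stabilization estimate \eqref{4-12} on a single long interval and then compares with the ODE \eqref{4-73}, following Lasiecka--Tataru. Your route needs time-weighted multipliers but obtains the rates directly from the classical integral inequality. Both rest on the same coercivity \eqref{E7}.
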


\begin{rmk}
By \eqref{p} and \(1\leq m\leq 5\), we have \(1< p < 5\) in Theorem \ref{thm4-1}. 
Moreover, according to Theorem \ref{thm4-1}, a larger value of \(m\) leads to a slower decay rate as \(t \to \infty\). 
The fastest exponential decay rate occurs when \(m = r = 1\), namely, when the damping terms are linear. 
\end{rmk}

\vspace{0.1 in}

\subsection{Finite-Time Blow-Up}
Finally, we present two blow-up results: blow-up with negative initial energy and blow-up with positive initial energy.

The first blow-up result shows that if the initial energy is negative and the source terms are stronger than the damping terms, then weak solutions of \eqref{PDE} blow up in finite time.

\begin{thm}[\bf Blow-up with negative initial energy]\label{thm6-1}
Assume the parameters \( m \), \( r \), and \( p \) satisfy \eqref{p}. Moreover, suppose $p>m$, $p>r>1$, and $\widehat{E}(0)<0$. Then the weak solution $(u(t),w(t))$ of system \eqref{PDE} blows up in finite time. In particular,
$$
\limsup_{t \to T^-} (\|\nabla u(t)\|_2^2+|\Delta w(t)|_2^2)= +\infty,
$$
for some $0<T<\infty$, where $T$ has an explicit upper bound, depending on the initial data, given in (\ref{Tbound}).
\end{thm}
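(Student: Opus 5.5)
We will follow the Georgiev--Todorova concavity-type argument, adapted to the logarithmic source and the acoustic coupling. Set $H(t) = -\widehat E(t)$. The energy identity \eqref{energy} can be rewritten in terms of $\widehat E$, since $\frac{d}{dt}\big[\frac{1}{p+1}\int |u|^{p+1}\log|u| - \frac{1}{(p+1)^2}\|u\|_{p+1}^{p+1}\big] = \int |u|^{p-1}u\log|u|\,u_t$. This gives
\[
\widehat E(t) + \int_0^t \|u_t\|_{m+1}^{m+1} + \int_0^t |w_t|_{r+1}^{r+1} = \widehat E(0).
\]
So $H$ is nondecreasing and $0<H(0)\le H(t)$. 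The logarithmic terms are controlled as follows. On the region where $|u|\ge e^{\kappa}$, $\log|u|$ is large, and $\int |u|^{p+1}\log|u|$ dominates $\|u\|_{p+1}^{p+1}$ up to a bounded error. Using $H(t)>0$ we get
\[
\|\nabla u\|_2^2+|\Delta w|_2^2 < \tfrac{2}{p+1}\Big(\int|u|^{p+1}\log|u| + \int|w|^{p+1}\log|w|\Big) =: \tfrac{2}{p+1}Q(t).
\]
We also get lower bounds $Q(t)\ge (p+1)H(t)$ and $\|u\|_{p+1}^{p+1}+|w|_{p+1}^{p+1}\le C(1+Q(t))$. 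For exponents $q<p+1$ we use $|s|^{q}\le C(1+|s|^{p+1}\log|s|\,\chi_{|s|>e})$, so that $\|u\|_{m+1}^{m+1}\le C(1+Q)$ and likewise on $\Gamma$.

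Next, define the Lyapunov functional
\[
L(t)=H(t)^{1-\alpha}+\varepsilon\big[(u_t,u)_\Omega+(w_t,w)_\Gamma+\tfrac12|\gamma u|_2^2\big],
\]
where $\alpha>0$ is small. The extra term $\tfrac12|\gamma u|_2^2$ is chosen so that, formally, its derivative $(\gamma u_t,\gamma u)$... More precisely, we test the wave equation with $\phi=u$ and the plate equation with $\psi=w$ in \eqref{wkslnwave}--\eqref{wkslnplt}. This is the step where the integration by parts in time handles $u_t|_\Gamma$. The coupling terms $-(w_t,\gamma u)_\Gamma$ from the wave equation and $(\gamma u_t,w)_\Gamma$ from the plate equation then combine into $\frac{d}{dt}(\gamma u,w)_\Gamma - 2(w_t,\gamma u)_\Gamma$. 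Adjusting the functional by $\varepsilon(\gamma u,w)_\Gamma$, we obtain
\[
L'(t)\ge(1-\alpha)H^{-\alpha}H' + \varepsilon\big[\|u_t\|_2^2+|w_t|_2^2+Q-\|\nabla u\|^2-|\Delta w|^2 - \textstyle\int|u_t|^{m-1}u_tu - \int|w_t|^{r-1}w_tw - 2(w_t,\gamma u)_\Gamma\big].
\]
We then write $Q-\|\nabla u\|^2-|\Delta w|^2$ as $(p+1)H+\frac{p-1}{2}(\|\nabla u\|^2+|\Delta w|^2)+\frac{p+1}{2}(\|u_t\|^2+|w_t|^2)+\frac1{p+1}(\|u\|_{p+1}^{p+1}+|w|_{p+1}^{p+1})$, using the definition of $\widehat E$.

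The damping terms are handled by Young's inequality with the weight $H^{-\alpha}$, in the standard way: $\int|u_t|^m|u|\le \delta^{-m}\|u_t\|_{m+1}^{m+1}\cdots$. Choosing $\delta$ proportional to $H^{-\alpha}$ gives terms bounded by $H'H^{-\alpha}$ times a constant, plus $H^{\alpha m}\|u\|_{m+1}^{m+1}$. The latter is at most $C Q^{\alpha m+\frac{m+1}{p+1}}$-type quantities, which are absorbed into $Q$ once $\alpha\le \frac{p-m}{m(p+1)}$; the same holds with $r$, using $p>r$. The coupling term $2(w_t,\gamma u)_\Gamma$ is the main obstacle, because it cannot be absorbed by the kinetic energy on $\Gamma$ with a small constant unless $\|\nabla u\|^2$ is large. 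Our plan is to bound it by $|w_t|_{r+1}|\gamma u|_{(r+1)/r}$. Since $r>1$, the exponent $(r+1)/r<2$, so trace and Sobolev embeddings give $|\gamma u|_{(r+1)/r}\le C\|\nabla u\|_2$. Then Young's inequality with the weight $H^{-\alpha}$ gives
\[
|w_t|_{r+1}\|\nabla u\|\le \delta^{-r}|w_t|_{r+1}^{r+1} + \delta\|\nabla u\|^{\frac{r+1}{r}}.
\]
Here the first term goes into $H'$, and the second is absorbed by $\frac{p-1}{2}\|\nabla u\|^2$ together with the lower bound $H\ge H(0)$; for this we need $\frac{r+1}{r}<2$, which is exactly where $r>1$ (and the damping $|w_t|^{r-1}w_t$) is used. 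With these choices we get $L'\ge C\varepsilon\big(H+\|u_t\|^2+|w_t|^2+\|\nabla u\|^2+|\Delta w|^2+Q\big)$ for $\varepsilon$ small, with $L(0)>0$.

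Finally, we show $L^{1/(1-\alpha)}\le C(H+\|u_t\|^2+|w_t|^2+\|\nabla u\|^2+|\Delta w|^2+Q)$. This uses $|(u_t,u)|^{1/(1-\alpha)}\le C(\|u_t\|^{2}+\|u\|_2^{2/(1-2\alpha)})$, with $\|u\|_2^{2/(1-2\alpha)}\le C\|u\|_{p+1}^{\ldots}\le C(H+Q)$ for small $\alpha$; the same argument applies to the boundary terms $(\gamma u,w)$ and $|\gamma u|_2^2$, which are controlled by $\|\nabla u\|^2$. Hence $L'\ge cL^{1/(1-\alpha)}$, so $L$ blows up before
\[
T\le \frac{1-\alpha}{\alpha c\,L(0)^{\alpha/(1-\alpha)}},
\]
which is the bound \eqref{Tbound}. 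Since $L$ is dominated by powers of the $X$-norm and the $L^2$-norms of the velocities, and these are controlled by the $X$-norm through the energy identity, this forces $\limsup(\|\nabla u\|^2+|\Delta w|^2)=\infty$. All formal computations are justified through the weak formulation with $\phi=u$, $\psi=w$, which are admissible test functions by Definition \ref{def:weaksln}.
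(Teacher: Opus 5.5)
Your proposal is essentially the paper's proof. Once the stray $\tfrac12|\gamma u|_2^2$ is dropped and only the correction $\varepsilon(\gamma u,w)_\Gamma$ is kept, your $L$ is exactly the paper's $\mathcal{Y}=G^{1-a}+\varepsilon Q$, with $Q=(u,u_t)_\Omega+(w,w_t)_\Gamma+(\gamma u,w)_\Gamma$ (your $Q$ is the paper's $S$). Your handling of the damping terms by the weighted Young inequality, and of $2(\gamma u,w_t)_\Gamma$ through the plate damping with $r>1$, matches \eqref{6-18}--\eqref{B10}; the only cosmetic difference is that you absorb $\|\nabla u\|_2^{(r+1)/r}$ into $\|\nabla u\|_2^2$ and $H$, where the paper first bounds $\|\nabla u\|_2$ by $S^{1/2}$.

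Do actually delete $\tfrac12|\gamma u|_2^2$. Its derivative would need $\gamma u_t$, which does not exist for weak solutions, and $(|\gamma u|_2^2)^{1/(1-\alpha)}$ is not bounded by $\|\nabla u\|_2^2$. Also, the coupling term enters $L'$ as $+2(w_t,\gamma u)_\Gamma$ rather than with a minus sign; this is harmless since you bound its absolute value. Finally, derive the $\limsup$ from the blow-up alternative of Theorem~\ref{t:1} rather than from $L$ itself: a finite maximal time forces $\limsup E(t)=+\infty$, and $E(t)\le \frac{1}{p+1}S(t)\le C\|(u,w)\|_X^{p+1+\sigma}$.
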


\vspace{0.1 in}

Before stating the second blow-up result, we introduce several constants. We define the best embedding constants:
\begin{align}\label{3-12}
\alpha_1:=\sup_{u\in
H^1_{\Gamma_0}(\Omega)\backslash\{0\}}\frac{\|u\|^{6}_{6}}{\|\nabla
u\|^{6}_2},\ \ 
\alpha_2:=\sup_{w\in
H^2_0(\Gamma)\backslash\{0\}}\frac{|w|^{6}_{6}}{|\Delta
w|^{6}_2}.
\end{align}

The function $\Psi:\mathbb{R}^+\to\mathbb{R}$ is defined by
\begin{align}\label{b1}
\Psi(z):=z - \frac{8(\alpha_1 + \alpha_2)}{(p+1) e (5-p)} z^{3}.
\end{align}
The function $\Psi(z)$ is continuously differentiable, concave, and
has its maximum at $z = z_0 = \frac{1}{2} \left[ \frac{(p+1) e (5-p)}{6(\alpha_1 + \alpha_2) } \right]^{\frac{1}{2}} >0$. Define
\begin{align*}
\hat{d}:=\sup_{[0,\infty)}\Psi(z)=\Psi(z_0).
\end{align*}  

In addition, we define a constant $B>0$ satisfying
\begin{align}  \label{defB}
\frac{p-1}{4} z_0 = \frac{p+3}{2}B+ C_2 B^{\frac{r+1}{2(r-a)}},
\end{align}
where $a \in (0,\frac{1}{2})$ is given in \eqref{6-4} and $C_2 = \frac{r-a}{r}\left(\frac{4a}{r(p-1)}\right)^{\frac{a}{r-a}}  \left(\frac{r}{r+1} \right)^{\frac{r}{r-a}}$. 
From (\ref{defB}), the constant $B$ satisfies
\begin{align}\label{Bz}
B <  z_0.
\end{align}

Then we have the following result regarding the blow-up of solutions when the initial total energy is positive.

\begin{thm} [{\bf Blow-up with positive initial energy}]\label{thm6-2}
Assume the parameters \( m \), \( r \), and \( p \) satisfy \eqref{p}.
Further assume
\begin{align}\label{y0} 
E(0)>z_0,\ \ \mbox{and}\ 0\leq\widehat{E}(0)< \min \{B, \hat{d}\}.
\end{align}
Then the weak solution $(u(t),w(t))$ of \eqref{PDE} blows up in
finite time provided $p>m$ and $p>r>1$. In particular,
$$
\limsup_{t\rightarrow T^-}(\|\nabla u(t)\|_2^2+|\Delta w(t)|_2^2) =+\infty,
$$
for some $0<T<\infty$.
\end{thm}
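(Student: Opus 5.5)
We prove Theorem~\ref{thm6-2} by a Vitillaro/Georgiev--Todorova type argument: first trap the quadratic energy above $z_0$ for all time, then show that an auxiliary functional $H(t)=\hat d-\widehat E(t)$, suitably perturbed, satisfies a superlinear differential inequality.

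\textbf{Step 1: a lower bound for the source terms.} Use the elementary inequality $s^{p+1}\log s\le \frac{1}{e(5-p)}s^{6}$ for $s>0$ (maximize $s^{p-5}\log s$). Inserting this into \eqref{3-8} together with \eqref{3-12} gives, writing $z=E(t)$,
\begin{align*}
\widehat E(t)\ge E(t)-\frac{1}{(p+1)e(5-p)}\bigl(\alpha_1\|\nabla u\|_2^6+\alpha_2|\Delta w|_2^6\bigr)\ge \Psi(E(t)),
\end{align*}
where we bound $\|\nabla u\|_2^2,|\Delta w|_2^2\le 2E(t)$. This produces the constant $8$ in \eqref{b1}; the positive term $\frac{1}{(p+1)^2}(\cdots)$ is simply discarded.

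\textbf{Step 2: invariance $E(t)>z_0$.} By the energy identity \eqref{energy}, $\widehat E$ is nonincreasing. Since $\Psi$ is decreasing on $[z_0,\infty)$ and $\widehat E(t)\le\widehat E(0)<\hat d=\Psi(z_0)$, a continuity argument shows that $E(t)$ never reaches $z_0$. More precisely, it stays above some $z_1>z_0$ with $\Psi(z_1)=\widehat E(0)$. Consequently $\int_\Omega|u|^{p+1}\log|u|+\int_\Gamma|w|^{p+1}\log|w|$ is bounded below by a positive multiple of $E(t)$, obtained from $\widehat E(t)<\hat d$ and Step 1.

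\textbf{Step 3: the functional and its derivative.} Set $H(t)=\hat d-\widehat E(t)>0$, which is nondecreasing with $H'=\|u_t\|_{m+1}^{m+1}+|w_t|_{r+1}^{r+1}$. Let
\begin{align*}
L(t)=H^{1-\alpha}(t)+\varepsilon\bigl[(u_t,u)_\Omega+(w_t+\g u,w)_\Gamma\bigr].
\end{align*}
The term $(\g u,w)_\Gamma$ absorbs the coupling; it is handled via the weak formulation with $\phi=u$, $\psi=w$. Differentiating, the crucial term $(u_t|_\Gamma,w)$ turns into $(\g u,w_t)$ plus a total derivative. We then bound $\int_\Gamma \g u\, w_t\le \delta\|\nabla u\|_2^2+C_\delta |w_t|_{r+1}^{r+1}\cdot(\cdots)$ using Young's inequality with exponents depending on $r$ and $a$. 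This is exactly where $r>1$, $a\in(0,\frac12)$ and the constants $C_2$, $B$ of \eqref{defB} enter.

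The damping terms $\int|u_t|^{m-1}u_t u$ and $\int|w_t|^{r-1}w_t w$ are estimated by Hölder and Young's inequality, using $H'$ together with $\|u\|_{m+1}^{m+1}\le C\|u\|_{p+1}^{p+1}$ and the choice $\alpha$ small. This requires $p>m$, $p>r$ and the supercritical interpolation $\|u\|_{p+1}\lesssim$ the source integral.

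Using Step 2 and the condition $\widehat E(0)<B$ together with \eqref{Bz}, we obtain
\begin{align*}
L'(t)\ge C\bigl(H(t)+\|u_t\|_2^2+|w_t|_2^2+\|\nabla u\|_2^2+|\Delta w|_2^2\bigr),
\end{align*}
with the coercive quantity comparable to $L^{1/(1-\alpha)}$.

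\textbf{Step 4: conclusion.} Step 3 gives $L'\ge CL^{1/(1-\alpha)}$ with $L(0)>0$ for small $\varepsilon$, so $L$ blows up in finite time. This forces $\|\nabla u\|_2^2+|\Delta w|_2^2\to\infty$, since $L^{1/(1-\alpha)}$ is controlled by the energy norms.

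\textbf{Main obstacle.} We expect the main obstacle to be the acoustic coupling term in Step 3, because $u_t|_\Gamma$ has no trace. The first thing to try is the time integration by parts in \eqref{wkslnplt} combined with absorption by the plate damping $|w_t|^{r+1}$; if needed, we would justify the computation on smooth approximations and pass to the limit.
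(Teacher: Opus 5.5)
\textbf{Verdict: genuine gap.} Your Steps 1, 2 and 4 coincide with the paper's proof. Your $L$ is the paper's $Y$ except for the constant at which $H$ is anchored. The gap is in Step 3, exactly where the positive-energy case differs from the negative-energy one. You assert $L'\ge C(\cdots)$ without saying how the negative constant produced by $Q'$ is absorbed, and the Vitillaro mechanism suggested by $H=\hat d-\widehat E$ does not close. Eliminating part of $S$ through the energy identity gives, as in \eqref{B20},
\begin{align*}
Q'&=2\bigl(\|u_t\|_2^2+|w_t|_2^2\bigr)-\tfrac{p+3}{2}\widehat E+\tfrac{p-1}{2}E+\tfrac{p-1}{2(p+1)}S\\
&\quad+\tfrac{p+3}{2(p+1)^2}\bigl(\|u\|_{p+1}^{p+1}+|w|_{p+1}^{p+1}\bigr)+R,
\end{align*}
with $R$ the damping and coupling terms. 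Write $\Psi(z)=z-\kappa z^3$, so that $3\kappa z_0^2=1$ and $\hat d=\frac23 z_0$. Substituting $-\widehat E=H-\hat d$, as Vitillaro does, leaves the constant $-\frac{p+3}{2}\hat d=-\frac{p+3}{3}z_0$. The estimates at hand only supply $\frac{p-1}{2}E+\frac{p-1}{2(p+1)}S\ge (p-1)E-\frac{p-1}{2}\widehat E(0)$, using $S\ge (p+1)(E-\widehat E(0))$, which is all that $\widehat E(t)\le \widehat E(0)$ gives. Nothing prevents $E(t)$ from staying near $z_1\le\sqrt3\,z_0$. So this bookkeeping fails for $p$ near $1$, and for every $p<5$ if $\widehat E(0)$ is close to $\hat d$. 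The structural reason is that $\hat d$ is the well depth for the effective sextic bound $s^{p+1}\log s\le s^6/(e(5-p))$, whereas the virial identity weights the source only by $\frac{p-1}{p+1}<\frac23$.

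The missing idea is the role of $B$. The paper controls this constant through $\widehat E(t)\le\widehat E(0)<B$ (it anchors $\widehat G=B-\widehat E$), so the leftover is only $-\frac{p+3}{2}B$. Together with the remainder $-C_2B^{\frac{r+1}{2(r-a)}}$ from the coupling term in \eqref{B24}, this is absorbed by $\frac{p-1}{4}E(t)>\frac{p-1}{4}z_0$, which is precisely the defining relation \eqref{defB} (see \eqref{b30-4}). The roles of \eqref{Bz} and $E(t)\ge z_1>z_0$ are to give $\widehat G<S/(p+1)$, \eqref{b27}, which turns the factors $\widehat G^{-a}$ and $\widehat G^{a/r}$ into powers of $\widehat G(0)$ and $S$. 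In your sketch $B$ enters only through the coupling estimate, so this absorption step is absent. You could keep $H=\hat d-\widehat E$ if you bound $-\widehat E(t)\ge -B$ directly, since $\hat d<z_0$ still yields $H<S/(p+1)$; the anchor constant then plays no role.

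There is also a second, smaller gap. Your coercive quantity $H+\|u_t\|_2^2+|w_t|_2^2+\|\nabla u\|_2^2+|\Delta w|_2^2$ does not dominate $L^{1/(1-a)}$. The reason is that $|Q|^{1/(1-a)}$ produces $\|u\|_2^{2/(1-2a)}$, which is superquadratic in $\|\nabla u\|_2$. You must keep $S$ (equivalently $\|u\|_{p+1}^{p+1}+|w|_{p+1}^{p+1}$, by \eqref{B26-1}) in the lower bound for $L'$. Then use $\|u\|_{p+1}^{2/(1-2a)}\le C S^{\frac{2}{(1-2a)(p+1)}}\le CS$, which requires $a<\frac{p-1}{2(p+1)}$ and $S\ge (p+1)\widehat G(0)>0$.

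Finally, no smoothing is needed for the acoustic term. Testing the weak formulation with $\phi=u$, $\psi=w$ gives \eqref{6-10} and hence \eqref{6-9} directly.
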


\vspace{0.1 in}

\begin{rmk}
In Theorem \ref{thm6-2}, the two conditions \(E(0)>z_0\) and \(0\leq \widehat{E}(0)< \min \{B, \hat{d}\}\) are compatible, and initial data satisfying both can be constructed by a scaling argument. Indeed, consider nonzero initial data \((u_0, w_0) = \lambda (\phi, \psi)\). Then
\begin{align} \label{scaling1}
J(u_0,w_0) \sim - \lambda^{p+1} \log \lambda
\quad \text{as } \lambda \rightarrow \infty.
\end{align}
Thus, we can choose \(\lambda\) sufficiently large so that \(J(u_0,w_0)<0\). Next, we can choose \((u_1,w_1)\) such that
\begin{align} \label{scaling2}
\widehat E(0) = \frac{1}{2} \left(\|u_1\|_2^2 + |w_1|_2^2\right) + J(u_0,w_0)
\in \left[0, \min \{B, \hat{d}\}\right).
\end{align}
By (\ref{scaling1}) and (\ref{scaling2}), we have \(E(0) \sim \lambda^{p+1} \log \lambda\), and thus \(E(0) > z_0\) when \(\lambda\) is sufficiently large. Therefore, the set of initial data leading to blow-up in Theorem \ref{thm6-2} is nonempty and rich.
\end{rmk}

\vspace{0.1 in}

The following corollary concerns the blow-up of solutions when the initial data belong to the unstable region $W_2$.

\begin{cor}\label{cor1}
Assume that \eqref{p} holds, $p>m$, $p>r>1$, $0\leq\widehat{E}(0)< \min \{B, \hat{d}\}$, and $(u_0,w_0)\in W_2$. Then the weak solution $(u(t),w(t))$ of \eqref{PDE} blows up in finite time.
\end{cor}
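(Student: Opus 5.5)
The plan is to reduce Corollary \ref{cor1} to Theorem \ref{thm6-2}. Since $0\leq\widehat E(0)<\min\{B,\hat d\}$ is already assumed, the only remaining hypothesis of Theorem \ref{thm6-2} is $E(0)>z_0$. So it suffices to show that $(u_0,w_0)\in W_2$, together with $\widehat E(0)<\hat d$, forces $E(0)>z_0$.

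\emph{Step 1: a lower bound for $J$.} I would bound $J$ from below in terms of $z=\|(u,w)\|_X^2$ on the region where $I<0$. Write
\[
J(u,w)=\frac{1}{p+1}I(u,w)+\frac{p-1}{2(p+1)}\|(u,w)\|_X^2+\frac{1}{(p+1)^2}\left(\|u\|_{p+1}^{p+1}+|w|_{p+1}^{p+1}\right).
\]
This form is natural, but it does not by itself give the right comparison. The estimate that produces the constant in $\Psi$ is the elementary inequality
\[
s^{p+1}\log s\le \frac{s^6}{e(5-p)}\qquad (s>0),
\]
which follows by maximizing $s^{p-5}\log s$. Using it together with the embedding constants \eqref{3-12}, I expect
\[
\int_\Omega|u|^{p+1}\log|u|\,dx+\int_\Gamma|w|^{p+1}\log|w|\,d\Gamma\le \frac{\alpha_1+\alpha_2}{e(5-p)}\,\|(u,w)\|_X^6.
\]
With $z$ taken, after normalizing constants, as a multiple of $E$ or of $\|(u,w)\|_X^2$, this gives $J(u,w)\ge c\,\Psi(z)$ with the constants in \eqref{b1}; the factor $8$ presumably comes from passing from $E$ to $\|(u,w)\|_X^2\le 2E$. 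The first thing to do is match these constants exactly.

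\emph{Step 2: $W_2$ lies beyond the maximizer $z_0$.} Suppose $(u_0,w_0)\in W_2$, so $I(u_0,w_0)<0$. Then
\[
\|(u_0,w_0)\|_X^2<\int_\Omega|u_0|^{p+1}\log|u_0|\,dx+\int_\Gamma|w_0|^{p+1}\log|w_0|\,d\Gamma\le C\|(u_0,w_0)\|_X^6 .
\]
This yields a lower bound $\|(u_0,w_0)\|_X^2\ge C^{-1/2}$, a positive threshold. I would then argue by a continuity (or comparison) argument along $\lambda(u_0,w_0)$ that $E(0)\ge \tfrac12\|(u_0,w_0)\|_X^2$ cannot satisfy $E(0)\le z_0$. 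The reason is that $\widehat E(0)\ge J(u_0,w_0)\ge\Psi(\cdot)$ on $[0,z_0]$, while the Nehari-type constraint $I<0$ sits on the descending branch of $\Psi$ past $z_0$.

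\emph{Main obstacle.} The hardest step is Step 2: showing $I<0$ implies $z>z_0$ precisely, rather than merely $z$ exceeding some other threshold. My first attempt would be the direct estimate $z< \frac{\alpha_1+\alpha_2}{e(5-p)}z^3$ (in the appropriate normalization), and then checking that $\bigl(\tfrac{e(5-p)}{\alpha_1+\alpha_2}\bigr)^{1/2}$ dominates $z_0$. Since $z_0$ carries the extra factors $\tfrac12$ and $\sqrt{(p+1)/6}$, this comparison looks plausible. If the constants fail to match, I would instead use the argument of Theorem \ref{thm6-2}: the condition $\widehat E(0)<\hat d$ together with the continuity of $E(t)$ shows that $E(t)\neq z_0$ for all $t$. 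Combining this with the lower bound, the case $E(0)<z_0$ with $I<0$ can then be excluded. Once $E(0)>z_0$ is established, Theorem \ref{thm6-2} applies directly and yields blow-up.
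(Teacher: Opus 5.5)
Your reduction to Theorem \ref{thm6-2} via the direct estimate $\|(u_0,w_0)\|_X^2<\frac{\alpha_1+\alpha_2}{e(5-p)}\|(u_0,w_0)\|_X^6$, which gives $E(0)\ge\frac12\|(u_0,w_0)\|_X^2>\frac12\bigl[\frac{e(5-p)}{\alpha_1+\alpha_2}\bigr]^{1/2}$, is exactly the paper's proof. The comparison with $z_0$ does close: the $\frac12$ is used up by $E(0)\ge\frac12\|(u_0,w_0)\|_X^2$, and the remaining factor $\sqrt{(p+1)/6}$ is $<1$ because $p<5$ (which follows from $p>m$ and $p\frac{m+1}{m}<6$), so your Step 1 and the continuity/fallback arguments are unnecessary.
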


\vspace{0.1 in}

\section{Local and Global Well-Posedness of Weak Solutions} \label{sec-Local}

This section addresses the local well-posedness of weak solutions to system \eqref{PDE}, as well as the global existence of weak solutions when the damping terms are stronger than the source terms.

\subsection{Local Well-Posedness}
Theorem \ref{t:1} establishes the local well-posedness of weak solutions to our model (\ref{PDE}). The local well-posedness includes the local existence, uniqueness, and continuous dependence on initial data. 
We note that Theorem \ref{t:1} is a corollary of Theorems 1.4, 1.7 and 1.9 in \cite{Becklin-Rammaha2} by Becklin and Rammaha. 
In particular, it suffices to verify that the logarithmic source terms in our model (\ref{PDE}) satisfy Assumptions 1.1 and 1.6 in \cite{Becklin-Rammaha2}. More precisely, \cite{Becklin-Rammaha2} establishes the local existence of weak solutions for a structural acoustic model, where the source term $f(u)$ is a general $C^1(\mathbb R)$ function satisfying 
\begin{align} \label{p1-1}
|f'(s)| \leq C (|s|^{p_1-1} +1), \;\;\text{with} \; 1 \leq p_1 <6 \;\;\text{and} \;\; p_1 \frac{m+1}{m}<6.
\end{align}
To obtain uniqueness and continuous dependence on initial data, \cite{Becklin-Rammaha2} additionally requires that, for $p_1\geq 3$, $f\in C^2(\mathbb R)$ satisfying
\begin{align} \label{p1-2}
|f''(s)| \leq C (|s|^{p_1-2} +1).
\end{align}

Recall that we define the function $|s|^{p-1} s \log |s|$ to be zero when $s=0$, where $p>1$. 
More precisely, we identify the function $|s|^{p-1} s \log|s|$ with the following $C^1$ function $f(s)$:  
\begin{align}  \label{def-f}
f(s) = 
\begin{cases}
|s|^{p-1} s \log |s|, & \text{if } s \neq 0, \\
0, & \text{if } s = 0,
\end{cases}
\end{align}
where $p>1$. Moreover, if $p>2$, then $f$ defined in (\ref{def-f}) belongs to $C^2(\mathbb R)$.

\begin{prop}  \label{prop-f}
Let $f$ be the function defined in (\ref{def-f}), and let $\sigma>0$. Then, for all $s\in \mathbb R$,
\begin{align}
&|f'(s)| \leq C ( |s|^{(p+\sigma)-1} + 1),     \;\; \text{if} \;\;  p>1; \label{propf-1} \\
&|f''(s)| \leq C ( |s|^{(p+\sigma) -2} + 1),     \;\; \text{if} \;\;  p>2. \label{propf-2}
\end{align}
\end{prop}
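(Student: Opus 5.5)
We compute the derivatives of $f$ explicitly away from the origin and bound them separately on the regions $|s|\le 1$ and $|s|\ge 1$. Since $f$ is odd, we work with $s>0$, where $f(s)=s^p\log s$. There
\begin{align*}
f'(s) = p s^{p-1}\log s + s^{p-1}, \qquad
f''(s) = p(p-1)s^{p-2}\log s + (2p-1)s^{p-2}.
\end{align*}
By oddness, $f'$ is even and $f''$ is odd, so $|f'(s)|$ and $|f''(s)|$ are given by the same expressions with $s$ replaced by $|s|$ for all $s\neq 0$. At $s=0$ we use the paper's remark that $f\in C^1(\mathbb R)$ with $f'(0)=0$ when $p>1$, and $f\in C^2(\mathbb R)$ with $f''(0)=0$ when $p>2$. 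Both follow from $s^{p-1}\log s\to 0$ and $s^{p-2}\log s\to 0$ as $s\to 0^+$, together with the difference quotients $f(s)/s=\pm|s|^{p-1}\log|s|\to 0$ and $f'(s)/s\to 0$. Hence the inequalities hold trivially at $s=0$.

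For $0<|s|\le 1$ we show that $|f'|$ and $|f''|$ are bounded by a constant. For $f'$ this holds because $t\mapsto t^{p-1}|\log t|$ is continuous on $(0,1]$ and tends to $0$ at $0$ when $p>1$, so it is bounded. For $f''$ with $p>2$, the same argument applies to $t^{p-2}|\log t|$, and $t^{p-2}\le 1$. This gives $|f'(s)|\le C$ and $|f''(s)|\le C$ on this range, which are dominated by the right-hand sides of \eqref{propf-1} and \eqref{propf-2}.

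For $|s|\ge 1$ the key elementary inequality is $\log t\le \frac{1}{\sigma e}\,t^{\sigma}$ for $t\ge 1$. It follows by maximizing $t^{-\sigma}\log t$, whose maximum $1/(\sigma e)$ is attained at $t=e^{1/\sigma}$. With this,
\[
|f'(s)|\le \frac{p}{\sigma e}|s|^{p+\sigma-1}+|s|^{p-1}\le C|s|^{p+\sigma-1},
\]
using $|s|^{p-1}\le |s|^{p+\sigma-1}$ for $|s|\ge1$. Likewise,
\[
|f''(s)|\le \frac{p(p-1)}{\sigma e}|s|^{p+\sigma-2}+(2p-1)|s|^{p-2}\le C|s|^{p+\sigma-2}.
\]
Combining the two regions yields \eqref{propf-1} and \eqref{propf-2} with $C$ depending only on $p$ and $\sigma$.

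There is no real obstacle in this argument. The only points needing care are the regularity at $s=0$, which secures the $C^1$ and $C^2$ statements, and the observation that the logarithm forces the loss of an arbitrary $\sigma>0$ in the exponent, which cannot be removed since $\log|s|$ is unbounded.
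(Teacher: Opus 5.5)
Your proof is correct and follows essentially the same route as the paper: split into $0<|s|<1$ and $|s|\ge 1$, and on the large-$|s|$ region absorb the logarithm via $s^{-\sigma}\log s\le \frac{1}{e\sigma}$. The differences are minor: you bound the small-$|s|$ region by a continuity/limit argument rather than by the explicit constant from \eqref{ineq-log} with exponent $p-1$ (resp.\ $p-2$), and you write out the $f''$ case and the behavior at $s=0$, which the paper leaves as ``similarly.''
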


\begin{proof}
For any $\sigma>0$, we have
\begin{align}  \label{ineq-log}
|s^\sigma\log s|\leq \frac{1}{e\sigma} \ \mbox{for}\ 0<s<1,\ \ \ s^{-\sigma}\log s\leq  \frac{1}{e\sigma}   \ \mbox{for}\ s\geq 1.
\end{align}
We emphasize that the simple algebraic facts listed in (\ref{ineq-log}) are important for the analysis in this paper. 
Essentially, it shows that a power function $s^{\sigma}$, with any small positive power $\sigma$, can absorb the singularity of $\log s$ as $s\rightarrow 0^+$.  
Moreover, $s^{\sigma}$ grows faster than $\log s$ as $s\rightarrow \infty$, and thus $s^{-\sigma} \log s$ remains bounded as $s\rightarrow \infty$.

By differentiating (\ref{def-f}), we obtain
\begin{align*}
|f'(s)| &=  |s|^{p-1} +  p \left|\log |s| \right|  |s|^{p-1} \notag\\
& =  |s|^{p-1} +  p \left| |s|^{-\sigma}    \log |s| \right|  |s|^{p-1+\sigma} \notag\\
& \leq   |s|^{p-1} + p (e\sigma)^{-1}  |s|^{p-1 + \sigma}, \;\;\text{if}\;\; |s| \geq 1,
\end{align*}
where we have used (\ref{ineq-log}).

Moreover, since $p>1$, we obtain from (\ref{ineq-log}) that
\begin{align*}
|f'(s)| &=  |s|^{p-1} +  p \left|\log |s| \right|  |s|^{p-1}  \notag\\
& \leq   |s|^{p-1} +   p [e(p-1)]^{-1},  \;\;\text{if}\;\;  0<|s| < 1.
\end{align*}
Also, it is straightforward to verify that $f'(0) =0$.

By Young's inequality, for all $s\in \mathbb R$,
\begin{align*}
|f'(s)| \leq C (|s|^{p-1} + |s|^{p-1 + \sigma} + 1) \leq C ( |s|^{p-1 + \sigma} + 1), \;\; \text{where}  \;\; p>1, \; \sigma>0.
\end{align*}

Similarly, one can verify (\ref{propf-2}). 

\end{proof}

Since $1<p<6$, we have $1<p+\sigma<6$ for all sufficiently small $\sigma>0$. 
Moreover, since $p\frac{m+1}{m}<6$ by (\ref{p}), we have $(p + \sigma)\frac{m+1}{m}<6$ for all sufficiently small $\sigma>0$.  
Therefore, setting $p_1=p+\sigma$ and applying Proposition \ref{prop-f}, we conclude that the logarithmic source term $f(u) = |u|^{p-1} u \log |u|$ satisfies (\ref{p1-1}) and (\ref{p1-2}).
As a result, Theorem \ref{t:1} is a corollary of Theorems 1.4, 1.7 and 1.9 in \cite{Becklin-Rammaha2}.

\vspace{0.1 in}

\subsection{Global Existence with Dominating Damping Terms}

Theorem \ref{t:2} states that the local weak solution established in Theorem \ref{t:1} can be extended to a global solution if 
$u_0 \in L^{p+1+\sigma}(\Omega)$ with $\sigma>0$, and the damping terms are stronger than the source terms, namely, 
$\min\{m,r\}>p$. We note that Theorem \ref{t:2} is a corollary of Theorem 1.5 in \cite{Becklin-Rammaha2}. Recall that Theorem 1.5 in \cite{Becklin-Rammaha2} states that for the structural acoustic model with a general source term $f(u)$ satisfying (\ref{p1-1}), if 
\begin{align} \label{p1-3}
\min\{m,r\}\geq p_1 \;\;\text{and} \;\; u_0\in L^{p_1+1}(\Omega),
\end{align}
then the weak solution is global in time. As shown above, the logarithmic source term $f(u) = |u|^{p-1} u \log |u|$ satisfies (\ref{p1-1}) by setting $p_1 = p+\sigma$ for a sufficiently small $\sigma>0$. Moreover, since we assume $\min\{m,r\}>p$ and $u_0 \in L^{p+1+\sigma}(\Omega)$, condition (\ref{p1-3}) is satisfied. Therefore, the hypotheses of Theorem \ref{t:2} satisfy those of Theorem 1.5 in \cite{Becklin-Rammaha2}. Consequently, Theorem \ref{t:2} is a corollary of Theorem 1.5 in \cite{Becklin-Rammaha2}.

\vspace{0.1 in}

\section{Global Existence of Potential Well Solutions} \label{sec-Well}
In this section, we prove Theorem \ref{thm3-1}, which establishes the global existence of potential well solutions when the initial data $(u_0,w_0)\in W_1$. 
In our previous work \cite{FENG-JDE}, the source terms were required to be homogeneous for the potential well analysis. In the present manuscript, the logarithmic source terms are not homogeneous, and therefore our previous results in \cite{FENG-JDE} do not apply here. The nonhomogeneity of the logarithmic source terms causes difficulties in the analysis and constitutes a novel aspect of this work.

Before proving Theorem \ref{thm3-1}, we first prove Propositions \ref{lem3-1} and \ref{prop-W1}.

\subsection{Depth of the Potential Well}

In this subsection, we prove Proposition \ref{lem3-1}, which states that the depth $d$ of the potential well, as defined in \eqref{depth}, is strictly positive.

\begin{proof} [Proof of Proposition \ref{lem3-1}] 
Fix $(u,w) \in \mathcal{N}$. It follows from \eqref{3-8} and \eqref{Nehari} that
\begin{align}\label{3-11-1}
J(u,w)&\geq \frac{1}{2}(\|\nabla
u\|_2^2+|\Delta w|^2_2)- \frac{1}{p+1} \left(\int_\Omega |u|^{p+1} \log |u| dx +  \int_\Gamma |w|^{p+1} \log|w| d\Gamma \right) \notag\\
&\geq \left(\frac{1}{2}-\frac{1}{p+1}\right)(\|\nabla u\|_2^2+|\Delta w|^2_2).
\end{align}

Using $(u,w) \in \mathcal{N}$ and \eqref{ineq-log}, we have
\begin{align} \label{3-11-1-1}
\|\nabla u\|_2^2+|\Delta w|^2_2 &=            \int_\Omega |u|^{p+1} \log |u| dx + \int_\Gamma |w|^{p+1}\log|w| d\Gamma \notag\\
 &\leq  \int_{|u| > 1}|u|^{p+1+\sigma}|u|^{-\sigma}\log |u|dx + \int_{|w| > 1} |w|^{p+1+\sigma}|w|^{-\sigma} \log|w| d\Gamma\notag\\
 &\leq\frac{1}{e\sigma} \left(\|u\|^{p+1+\sigma}_{p+1+\sigma} + |w|^{p+1+\sigma}_{p+1+\sigma} \right) \notag\\
 &\leq C_{\sigma} \left( \|\nabla u\|^{p+1+\sigma}_{2} + |\Delta w|^{p+1+\sigma}_{2} \right),
\end{align}
where $\sigma>0$ is chosen sufficiently small so that $p+1+\sigma \leq 6$. 
Observe that, when $|u|<1$, the term $|u|^{p+1}\log |u|$ is negative and was therefore dropped from the above calculation. 

This gives
\begin{align}\label{3-11-2}
\|(u,w)\|^2_X\leq C_{\sigma}\|(u,w)\|^{p+1+\sigma}_X.
\end{align}
Since $(u,w)\neq(0,0)$, we obtain from \eqref{3-11-2} that
\begin{align} \label{3-11-2-2}
\|(u,w)\|_X\geq y_0        >0,
\end{align}
where $y_0 = \left(1/C_{\sigma}\right)^{\frac{1}{p-1+\sigma}}$.
Then, from \eqref{3-11-1} we obtain
$$
J(u,w)\geq \left(\frac{1}{2}-\frac{1}{p+1}\right)y_0^2 >0,\
\;\;\text{for all}\;\; (u,w)\in\mathcal{N}.
$$
Therefore, the depth $d$ of the potential well is positive, i.e., $d=\inf_{(u,w)\in\mathcal{N}}J(u,w) >0$.
This completes the proof.
\end{proof}

\vspace{0.1 in}

\subsection{The Stable Region $W_1$ of the Potential Well}
In this subsection, we prove Proposition \ref{prop-W1}, which states that the set $W_1$ defined in \eqref{defW1} is nontrivial; more precisely, it contains a ball centered at the origin.

\begin{proof}[Proof of Proposition \ref{prop-W1}]
Recall that $W=\{(u,w)\in X: J(u,w)<d\}$. Since $J(0,0) =0$ and $d>0$, we have $(0,0) \in W$. Furthermore, since $J$ is continuous, a neighborhood of $(0,0)$ in $X$ is contained in $W$;
that is, there exists $r>0$ such that the ball $B_{r}(0,0) = \{(u,w) \in X: \|(u,w)\|_X < r\}\subset W$. 
By estimates similar to those in (\ref{3-11-1-1})-(\ref{3-11-2-2}), we have $\|(u,w)\|_X\geq y_0>0$ for all $(u,w) \in W_2$. Thus, choosing $r< y_0$, we have $B_r(0,0) \cap W_2 =  \emptyset $.
Since $W_1 \cap W_2 = \emptyset$ and $W_1 \cup W_2 = W$, we conclude that $B_r(0,0) \subset W_1$.

\end{proof}

\subsection{Global Existence with $W_1$ Initial Data}
In this subsection, we prove Theorem \ref{thm3-1}, which establishes the global existence of weak solutions for initial data $(u_0,w_0)\in W_1$, the stable region of the potential well. 

\begin{proof} [Proof of Theorem \ref{thm3-1}]
Let $(u_0,w_0)\in W_1$ and $\widehat{E}(0)<d$.
The local well-posedness of the weak solution $(u(t),w(t))$ on $[0,T)$ is guaranteed by Theorem \ref{t:1}, where $[0,T)$ is the maximal interval of existence.

The energy identity (\ref{energy}) can be written as
\begin{align} \label{3-15}
\widehat{E}(t)+\int^t_0\int_\Omega
|u_t|^{m+1} dx ds 
+ \int^t_0\int_\Gamma |w_t|^{r+1} d\Gamma ds = \widehat{E}(0),   \;\;  \text{for all}   \,\, t\in [0,T).
\end{align}
Therefore, we obtain $\widehat{E}(t)\leq\widehat{E}(0)<d$.
In addition, since $J(u(t),w(t))\leq \widehat E(t)$, we have for any $t\in [0,T)$,
\begin{align}\label{3-16}
J(u(t),w(t))\leq\widehat{E}(t)\leq\widehat{E}(0)<d.
\end{align}
This means that assertion ($i$) holds, and we infer that $(u(t),w(t))\in W$ for all $t\in [0,T)$.

Next we prove that assertion ($ii$) holds. Namely, if   $(u_0,w_0)\in W_1$, 
the solution trajectory $(u(t),w(t))$ belongs to $W_1$ during the entire lifespan of the solution. 

We argue by contradiction to prove that $(u(t),w(t))\in W_1$ for all $t\in [0,T)$.
Suppose that there exists a time $t_1\in(0,T)$ such that
$(u(t_1),w(t_1))\notin W_1$. 
Since $(u(t_1),w(t_1)) \in W$, and 
$W_1\cup W_2=W$, $W_1\cap W_2=\emptyset$, it follows that $(u(t_1),w(t_1))\in W_2$.

Next, we show that $I(u(t),w(t))$ is continuous in $t$. Here we recall that $I(u,w) =   \|\nabla u\|^2_2+|\Delta w|^2_2-\int_\Omega |u|^{p+1} \log |u| dx-\int_\Gamma |w|^{p+1} \log |w| d\Gamma$.

Note that $\frac{d}{ds}(|s|^{p+1}\log |s|) = |s|^{p-1} s \left((p+1)\log|s|+1\right)$. Then, for any $t$, $t_0\in [0,T)$, by using the Mean Value Theorem, the assumption $1<p<5$ and (\ref{ineq-log}), we obtain 
\begin{align}  \label{G1} 
&\left|\int_\O|u(t)|^{p+1}\log|u(t)|dx - \int_\O|u(t_0)|^{p+1}\log|u(t_0)|dx\right|\notag\\
&\quad \leq C  \int_{\O} \Big|  |u(t)|^{5}  +    |u(t_0)|^{5} + 1 \Big|    |u(t)-u(t_0)|dx \notag\\
&\quad \leq C \left(\|u(t)\|_6^5 + \|u(t_0)\|_6^5 +1 \right) \|u(t) - u(t_0)\|_6 \notag\\
&\quad \leq C  \left(\|\nabla u(t)\|_{2}^{5} + \| \nabla u(t_0)\|_{2}^{5} +1 \right) \|\nabla( u(t) - u(t_0))\|_2 \rightarrow 0, \;\text{as} \; t\rightarrow t_0,
\end{align}
because $u \in C([0,T); H^1_{\G_0}(\O))$. It follows that $t\mapsto \int_{\Omega} |u|^{p+1} \log|u| dx$ is continuous on $[0,T)$.

Similarly, by using $w \in C([0,T);H^2_0(\Gamma))$, we obtain that $t\mapsto \int_\Gamma |w|^{p+1} \log|w| d\G$ is also continuous on $[0,T)$.

Consequently, $I(u(t),w(t))$ is continuous in $t$ on $[0,T)$.

Since $(u(0),w(0))\in W_1$, $(u(t_1),w(t_1))\in W_2$, and $I(u(t),w(t))$ is continuous in $t$, the Intermediate Value Theorem implies that there exists a time $s\in [0,t_1)$ such that
$I(u(s),w(s)) = 0$. We denote by $t^*$ the supremum of all $s \in [0,t_1)$ satisfying $I(u(s),w(s)) = 0$.
Thanks to the continuity of the function $I(u(t),w(t))$, we have that $t^* \in [0,t_1)$ satisfies $I(u(t^*),w(t^*)) = 0$, and $(u(t),w(t))\in W_2$
for any $t\in (t^*,t_1]$. We consider the following two cases.

{\bf Case 1.}  $(u(t^*),w(t^*))\neq(0,0)$.

Recalling the definition of the Nehari manifold $\mathcal N$ and noting that $I(u(t^*),w(t^*)) = 0$, we have $(u(t^*),w(t^*))\in\mathcal{N}$.
Then we obtain from \eqref{depth} that $J(u(t^*),w(t^*))\geq d$. This  contradicts \eqref{3-16}.

{\bf Case 2.} $(u(t^*),w(t^*))=(0,0)$.

 Note that
$(u(t),w(t))\in W_2$ for any $t\in(t^*,t_1]$. In view of the definition of the set $W_2$ and using estimates similar to those in (\ref{3-11-1-1})-\eqref{3-11-2}, one has that for any $t\in(t^*,t_1]$,
\begin{align}   \label{uwX}
\|(u(t),w(t))\|^2_X < C_{\sigma} \|(u(t),w(t))\|^{p+1+\sigma}_X ,
\end{align}
where $\sigma>0$ is small enough so that $p+1+\sigma \leq 6$. Since $(0,0)\notin W_2$, we have $(u(t),w(t)) \not=(0,0)$ for any $t\in (t^*,t_1]$.
Then, (\ref{uwX}) shows that $\|(u(t),w(t))\|_X > y_0 =     \left(1/C_{\sigma}\right)^{\frac{1}{p-1+\sigma}} >0$, for any $t\in (t^*,t_1]$.
By using the continuity of the weak solution $(u(t),w(t))$ from $[0,T)$ to $X$, we obtain
$\|(u(t^*),w(t^*))\|_X\geq y_0>0$, which contradicts $(u(t^*),w(t^*))=(0,0)$. 

Therefore $(u(t),w(t))\in W_1$ for all $t\in [0,T)$. This completes the proof of assertion ($ii$).

Finally, we prove the global existence of the weak solution $(u(t),w(t))$.  
It suffices to show that the quadratic energy $E(t)$ has a uniform bound independent of time.

By the definition of $W_1$, we have $I(u,w) \geq 0$. Then,
\begin{align}\label{E1}
\widehat E(t) \geq &\frac{1}{2}(\|u_t\|^2_2+|w_t|^2_2)+\left(\frac{1}{2}-\frac{1}{p+1}\right)(\|\nabla u\|^2_2 + |\Delta w|_2^2) \notag\\
&+ \frac{1}{(p+1)^2}\left(\|u\|^{p+1}_{p+1} + |w|^{p+1}_{p+1}\right).
\end{align}
By (\ref{E1}) and (\ref{3-16}), we obtain that for any $t\in[0,T)$,
\begin{align} \label{ubE}
E(t)&\leq \frac{p+1}{p-1} \widehat E(t) < \frac{p+1}{p-1} d.
\end{align}
This proves assertion ($iii$).

By Theorem \ref{t:1}, the local weak solution exists on $[0,T_0]$ 
where $T_0$ depends on the initial quadratic energy $E(0)$. Due to the uniform bound (\ref{ubE}) for the quadratic energy, 
the local solution can be extended to a global solution. Namely, the maximum lifespan $T=\infty$. 
This completes the proof.
\end{proof}

\vspace{0.1 in}

\begin{rmk}\label{rmk-4-1}
The continuity of the solution $(u,w)$ mapping from $[0,T)$ to $X=H^1_{\Gamma_0}(\Omega)\times H^2_0(\Gamma)$ is critical for the argument above, and essential for the validity of the entire paper. For instance, the implementation of the Intermediate Value Theorem in the above proof depends on the continuity of the solution. 
\end{rmk}

\vspace{0.1 in}

\section{Energy Decay Rates} \label{sec-decay}

In this section, we establish Theorem \ref{thm4-1}, namely, the uniform energy decay rates of potential well solutions.
We first establish a stabilization estimate, which is essential for proving the energy decay rates.

\subsection{A Stabilization Estimate}
We denote
\begin{align}    \label{Dt}
D(T):=\int^T_0(\|u_t(t)\|^{m+1}_{m+1}+|w_t(t)|^{r+1}_{r+1})dt.
\end{align}
Then the energy identity \eqref{3-15} becomes
\begin{align}\label{en-id}
\widehat{E}(t)+D(t)=\widehat{E}(0).
\end{align}

\begin{lem}\label{lem4-1}
Assume that the parameters \( m \), \( r \), and \( p \) satisfy \eqref{p}, with \( 1 \leq m \leq 5 \). 
Fix an arbitrary $d_0 \in (0,d)$, where $d$ is the depth of the potential well defined in (\ref{depth}). Let \( (u_0, w_0) \in W_1 \) with \( \widehat{E}(0) \leq d_0 <d \). Then the global potential well solution of system \eqref{PDE}, established in Theorem \ref{thm3-1}, satisfies
\begin{align}\label{4-12}
\widehat{E}(T)\leq  C_0   \Big( [D(T)]^{\frac{2}{m+1}}+ [D(T)]^{\frac{2}{r+1}} + D(T)\Big),
\end{align}
for \( T \) sufficiently large, where both the required size of $T$ and the constant \( C_0 \) depend on the ratio \( d/d_0 \). 
\end{lem}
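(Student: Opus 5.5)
The approach is the standard multiplier method. I test the wave equation with $\phi=u$ and the plate equation with $\psi=w$ in the weak formulation \eqref{wkslnwave}--\eqref{wkslnplt}; this is legitimate since $u\in C([0,T];H^1_{\G_0}(\O))$ and $u\in L^{m+1}$ because $m\le5$. Integrating over $(0,T)$ gives an equipartition-type identity:
\begin{align*}
\int_0^T I(u,w)\,dt &= \int_0^T\big(\|u_t\|_2^2+|w_t|_2^2\big)dt - \Big[(u_t,u)_\O+(w_t+\g u,w)_\G\Big]_0^T \\
&\quad +\int_0^T (\g u, w_t)_\G\,dt + \int_0^T (w_t,\g u)_\G\,dt \\
&\quad -\int_0^T\!\!\int_\O |u_t|^{m-1}u_t u - \int_0^T\!\!\int_\G |w_t|^{r-1}w_t w .
\end{align*}
The acoustic coupling terms are handled by writing $\int_0^T(\g u_t,w)$ as a boundary term minus $\int_0^T(\g u,w_t)$. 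They then reduce to endpoint terms plus $\int_0^T(\g u,w_t)$, which I bound by $\epsilon\int_0^T\|\nabla u\|_2^2+C_\epsilon\int_0^T|w_t|_2^2$ using the trace theorem.

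The key coercivity step uses the invariance in Theorem \ref{thm3-1} together with $\widehat E\le d_0<d$. I want to show
$$I(u,w)\ge \delta\big(\|\nabla u\|_2^2+|\Delta w|_2^2\big), \qquad \delta=\delta(d/d_0)>0,$$
along the trajectory. This is the delicate point, because the log nonlinearity is not homogeneous. I plan to argue via the fibering map $\lambda\mapsto J(\lambda u,\lambda w)$, for which $\frac{d}{d\lambda}J(\lambda u,\lambda w)=\lambda^{-1}I(\lambda u,\lambda w)$. Direct computation gives
$$\frac{d}{d\lambda}\Big[\lambda^{-2} I(\lambda u,\lambda w)\Big]=-\lambda^{p-2}\Big(\int_\O|u|^{p+1}+\int_\G|w|^{p+1}\Big)<0 .$$
Hence the ratio $I/\|(u,w)\|_X^2$ decreases in $\lambda$, and there is a unique $\lambda^*>1$ with $(\lambda^*u,\lambda^*w)\in\mathcal N$. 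Since $J(\lambda u,\lambda w)$ increases on $[1,\lambda^*]$, we get
$$d-d_0\le J(\lambda^*u,\lambda^*w)-J(u,w)=\int_1^{\lambda^*}\lambda^{-1}I(\lambda u,\lambda w)\,d\lambda\le \int_1^{\lambda^*}\lambda\, I(u,w)/1\;d\lambda .$$
I also need an upper bound on $\lambda^*$ in terms of $d/d_0$, and a uniform bound on $\|(u,w)\|_X$ from Theorem \ref{thm3-1}(iii) together with the lower bound $y_0$. Combining these should force $I(u,w)\ge\delta\|(u,w)\|_X^2$. If this route stalls, I would instead argue by contradiction with a compactness-free scalar estimate on the one-variable function $g(\lambda)=J(\lambda u,\lambda w)$. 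I expect this coercivity to be the main obstacle.

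With coercivity in hand, and since $J\le C\|(u,w)\|_X^2$ on bounded sets, I obtain
$$\int_0^T\widehat E\,dt\le C\int_0^T\big(\|u_t\|_2^2+|w_t|_2^2\big)dt + C\big(\widehat E(0)+\widehat E(T)\big)+\text{damping terms}.$$
The endpoint terms are bounded by $C\widehat E(T)+C D(T)$, using $\widehat E(0)=\widehat E(T)+D(T)$ and Poincaré/trace.

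For the damping terms, Hölder gives
$$\int_0^T\!\!\int_\O|u_t|^m|u|\le \|u_t\|_{L^{m+1}}^m\,\|u\|_{L^{m+1}(Q)}\le D^{m/(m+1)}\big(T\sup\|\nabla u\|_2^{m+1}\big)^{1/(m+1)},$$
using $m\le5$ and $\|\nabla u\|^2\le C\widehat E(0)$. Young's inequality then turns this into $\epsilon\int_0^T\widehat E+C_\epsilon(\ldots)$. The plate damping term is treated similarly.

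The kinetic terms are estimated by Hölder:
$$\int_0^T\|u_t\|_2^2\le C\,T^{\frac{m-1}{m+1}}D^{\frac{2}{m+1}},$$
and analogously with $r$. Finally, since $\widehat E$ is nonincreasing, $T\widehat E(T)\le\int_0^T\widehat E$. Taking $T$ large enough to absorb the $C\widehat E(T)$ term on the left yields \eqref{4-12}, with no lower-order terms, as advertised.
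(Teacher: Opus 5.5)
Your overall scheme is the paper's: test with $u$ and $w$, apply H\"older to the kinetic terms and Young to the damping and coupling terms, then use $\widehat E(0)=\widehat E(T)+D(T)$ and the monotonicity of $\widehat E$ to absorb $\widehat E(T)$ for large $T$. A coercivity bound $I(u,w)\ge\delta\|(u,w)\|_X^2$ with $\delta=\delta(d/d_0)$ is also true and would suffice. But that bound is the crux of the lemma, you do not prove it, and the route you sketch fails.

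\emph{First, the derivative is wrong.} Set $A=\|(u,w)\|_X^2$, $S=\int_\Omega|u|^{p+1}\log|u|\,dx+\int_\Gamma|w|^{p+1}\log|w|\,d\Gamma$ and $N=\|u\|_{p+1}^{p+1}+|w|_{p+1}^{p+1}$. Then $\lambda^{-2}I(\lambda u,\lambda w)=A-\lambda^{p-1}(S+N\log\lambda)$, whose derivative is $-\lambda^{p-2}\big[(p-1)(S+N\log\lambda)+N\big]$. This is positive at $\lambda=1$ as soon as $S<-N/(p-1)$, for example whenever $|u|,|w|<e^{-1/(p-1)}$ pointwise. So the ratio is not decreasing, and your inequality $I(\lambda u,\lambda w)\le\lambda^2 I(u,w)$ fails exactly in the small-amplitude regime. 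The monotone quantity is $\lambda^{-(p+1)}I(\lambda u,\lambda w)=\lambda^{1-p}A-S-N\log\lambda$, which is strictly decreasing. It gives uniqueness of $\lambda^*$ and $I(\lambda u,\lambda w)\le\lambda^{p+1}I(u,w)$ for $\lambda\ge 1$.

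\emph{Second, and more fundamentally, the energy-difference argument cannot give coercivity even after this correction.} It only yields $I(u,w)\ge (p+1)(d-d_0)/\big((\lambda^*)^{p+1}-1\big)$. Turning this into $I\ge\delta A$ requires an \emph{upper} bound on $\lambda^*$, in fact on $(\lambda^*)^{p+1}A$. No such bound exists:
\begin{itemize}
\item Since $\lambda^*(u,w)\in\mathcal N$, one has $\lambda^*\ge y_0/\|(u,w)\|_X$, which blows up as the solution decays.
\item Even $\|\lambda^*(u,w)\|_X$ is unbounded on $W_1\cap\{\widehat E\le d_0\}$. Take $(s\phi_k,0)$ with $s$ small, $\|\nabla\phi_k\|_2=1$ and $\phi_k\rightharpoonup 0$, so that $\|\phi_k\|_{p+1+\sigma}\to 0$ and the Nehari scaling of $\phi_k$ tends to infinity.
\end{itemize}
The fallback you mention is too vague to fill this gap.

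\emph{The missing idea is a \emph{lower} bound on $\lambda^*$, which comes from homogeneity.} Write $J=\frac{1}{p+1}I+\big(\frac12-\frac{1}{p+1}\big)A+\frac{1}{(p+1)^2}N$; the last two terms scale like $\lambda^2$ and $\lambda^{p+1}$. At the Nehari point, using $I(u,w)\ge 0$ and $\lambda^*>1$, you get $d\le J(\lambda^*u,\lambda^*w)\le(\lambda^*)^{p+1}J(u,w)\le(\lambda^*)^{p+1}d_0$. Hence $\lambda^*\ge(d/d_0)^{1/(p+1)}$. The identity $0=I(\lambda^*u,\lambda^*w)$ then rearranges exactly to $I(u,w)=\big(1-(\lambda^*)^{1-p}\big)A+N\log\lambda^*\ge\big(1-(d/d_0)^{\frac{1-p}{p+1}}\big)A$. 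This is the paper's \eqref{E7}, and it yields the absorption constant $\theta<1$ in \eqref{def-theta}.
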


\begin{rmk}
By \eqref{p} and \(1\leq m\leq 5\), we have \(1< p < 5\) in Lemma \ref{lem4-1}. Additionally, the constant \( C_0 \) may increase as \( d_0 \) approaches \( d \), that is, as the ratio \( d/d_0 \) approaches \( 1 \).
\end{rmk}

\begin{proof}
Since $u \in C([0,T];H^1_{\Gamma_0}(\Omega))$ and $1\leq m\leq 5$, we have $u \in L^{m+1}(\Omega\times(0,T))$. 
Therefore, we can replace $\phi$ with $u$ in \eqref{wkslnwave} and $\psi$ with $w$ in \eqref{wkslnplt},
and add the resulting equations to obtain
\begin{align}    \label{replace}
&\int^T_0(\|\nabla
u\|^2_2+|\Delta w|^2_2)dt-\int^T_0(\|u_t\|^2_2+|w_t|^2_2)dt+\int_\Omega u_tudx\bigg|^T_0+\int_\Gamma(w_tw+\gamma
uw)d\Gamma\bigg|^T_0\nonumber\\
&\qquad-2\int^T_0\int_\Gamma\gamma uw_td\Gamma
dt+\int^T_0\int_\Omega |u_t|^{m-1}u_tudxdt+\int^T_0\int_\Gamma
|w_t|^{r-1} w_t w d\Gamma dt\nonumber\\
&\qquad=\int^T_0\int_\Omega |u|^{p+1}\log|u| dxdt+\int^T_0\int_\Gamma
|w|^{p+1}\log|w| d\Gamma dt. 
\end{align}
Multiplying $(\ref{replace})$ by $\frac{1}{2}$ and using the definition of $\widehat{E}(t)$ in \eqref{3-9}, we obtain
\begin{align}\label{4-13-1}
\int^T_0\widehat E(t)dt&=\underbrace{-\frac{1}{2}\int_\Omega
u_tudx\bigg|^T_0-\frac{1}{2}\int_\Gamma (w_t w+\gamma u w)
d\Gamma\bigg|^T_0}_{:=Y_1}+\underbrace{\int^T_0(\|u_t\|^2_2+|w_t|^2_2)dt}_{:=Y_2}\nonumber\\
&\quad\underbrace{+\int^T_0\int_\Gamma \gamma u w_td\Gamma
dt}_{:=Y_3} \, \underbrace{-\frac{1}{2}\int^T_0\int_\Omega
|u_t|^{m-1}u_tudxdt-\frac{1}{2}\int^T_0\int_\Gamma |w_t|^{r-1} w_t w d\Gamma
dt}_{:=Y_4}\nonumber\\
&\quad\underbrace{+\left(\frac{1}{2}-\frac{1}{p+1}\right)\left(\int^T_0\int_\Omega
|u|^{p+1} \log|u| dxdt + \int^T_0\int_\Gamma |w|^{p+1} \log|w| d\Gamma dt \right)      }_{:=Y_{51}}\notag\\
&\quad\underbrace{+\frac{1}{(p+1)^2}\int^T_0 \left(\|u\|^{p+1}_{p+1} + |w|^{p+1}_{p+1} \right) dt}_{:=Y_{52}}.
\end{align}

We now estimate each term on the right-hand side of \eqref{4-13-1}. 

\smallskip

{\bf (1)  Estimate for $Y_1$.} Using the Cauchy-Schwarz inequality and the trace embedding, we obtain
\begin{align}\label{4-13-2}
&\left|\int_\Omega u_tudx+\int_\Gamma(w_tw+\gamma u
w)d\Gamma\right|\notag\\
&\quad\leq\frac{1}{2}(\|u_t\|^2_2+\|u\|^2_2+|w_t|^2_2+2|w|^2_2+|\gamma
u|^2_2)\nonumber\\
&\quad\leq C\Big(\|u_t\|^2_2+|w_t|^2_2+\|\nabla
u\|^2_2+|\Delta w|^2_2\Big) \leq C E(t) \leq C\widehat E(t),
\end{align}
where the last inequality is due to item ($iii$) of Theorem \ref{thm3-1}.

Then we infer from  \eqref{4-13-2} and \eqref{en-id} that
\begin{align}\label{4-14}
Y_1\leq C(\widehat E(T)+\widehat E(0))\leq C(2\widehat{E}(T)+D(T)).
\end{align}

\vspace{0.1 in}

{\bf (2) Estimate for $Y_2$.}  

Using  H\"{o}lder's inequality, we obtain
$$
\int^T_0\|u_t\|^2_2dt\leq (T|\O|)^{\frac{m-1}{m+1}}\left(\int^T_0\|u_t\|^{m+1}_{m+1} dt\right)^{\frac{2}{m+1}}\leq T|\O|  [D(T)]^{\frac{2}{m+1}},
$$
provided that $T$ is sufficiently large so that $T |\O| \geq 1$.
Similarly, we have
\begin{align}\label{E3}
\int^T_0|w_t|^2_2dt\leq T|\G|    [D(T)]^{\frac{2}{r+1}},
\end{align}
if $T |\Gamma| \geq 1$. Then we infer that
\begin{align}\label{E2}
Y_2\leq T(|\O|+|\G|)\Big( [D(T)]^{\frac{2}{m+1}} + [D(T)]^{\frac{2}{r+1}} \Big).
\end{align}

\vspace{0.1 in}

{\bf (3) Estimate for $Y_3$.} 

Using \eqref{E1} and \eqref{E3}, we obtain 
\begin{align}\label{E4}
Y_3 &\leq  \int_0^T |\gamma u|_2 |w_t|_2  dt  \leq  C\int_0^T \|\nabla u\|_2 |w_t|_2  dt  \leq  C\int_0^T  \left(\widehat E(t)\right)^{1/2}       |w_t|_2  dt  \notag\\
&\leq   \varepsilon \int^T_0   \widehat E(t)     dt+C_{\varepsilon}\int^T_0|w_t|^2_2 dt \leq \varepsilon\int^T_0   \widehat E(t)dt+TC_{\varepsilon}|\G| [D(T)]^{\frac{2}{r+1}}.
\end{align}

\vspace{0.1 in}

{\bf (4) Estimate for $Y_4$.}

By H\"{o}lder's inequality, we have
\begin{align*}
\left|\int^T_0\int_\O|u_t|^{m-1} u_t u dxdt \right|&\leq\left(\int^T_0\int_\O |u_t|^{m+1} dxdt \right)^{\frac{m}{m+1}} \left(\int^T_0\int_\O |u|^{m+1} dxdt \right)^{\frac{1}{m+1}}\\
&\leq [D(T)]^{\frac{m}{m+1}} \|u\|_{L^{m+1}(\O\times(0,T))}.
\end{align*}

Since $1\leq m \leq 5$, we have $\|u\|_{m+1} \leq  C\|\nabla u\|_2$ due to the Sobolev embedding. 
Moreover, from (\ref{E1}), we have $\|\nabla u\|_2^2 \leq C \widehat E(t)$.
In addition, by \eqref{3-15}, $\widehat E(t) \leq \widehat E(0)<d$.
Therefore, we obtain
\begin{align*}
\|u\|_{L^{m+1}(\O\times(0,T))}^{m+1} &\leq C\int^T_0\|\nabla u\|^{m+1}_2dt
\leq C \int_0^T \Big(\widehat E(t) \Big)^{\frac{m+1}{2}} dt
\leq C  d^{\frac{m-1}{2}}\int^T_0\widehat E(t)dt.
\end{align*}
It follows that
\begin{align}\label{E5}
\left|\int^T_0\int_\O|u_t|^{m-1} u_t u dxdt\right|\leq C [D(T)]^{\frac{m}{m+1}}\left(\int^T_0\widehat E(t)dt\right)^{\frac{1}{m+1}}.
\end{align}

Applying the Young inequality to \eqref{E5}, we obtain that for any $\varepsilon>0$,
$$
 \left|\int^T_0\int_\O|u_t|^{m-1} u_t u dxdt \right|\leq \varepsilon\int^T_0\widehat E(t)dt+C_{\varepsilon} D(T).
 $$
We can use a similar argument to estimate the second term of $Y_4$. As a result, for any $\varepsilon>0$,
 \begin{align}\label{E6}
 Y_4\leq 2\varepsilon\int^T_0\widehat E(t)dt+C_{\varepsilon} D(T).
 \end{align}

\vspace{0.1 in}

 {\bf (5) Estimate for $Y_5:=Y_{51}+Y_{52}$}.
This estimate is critical for the proof of the stabilization estimate (\ref{4-12}). 
We first claim that
 \begin{align}\label{E7}
 I(u,w) \geq  \left(1 -    \left( \frac{d}{d_0} \right)^{\frac{1-p}{p+1}}    \right )(\|\nabla u\|^2_2+|\Delta w|^2_2) +\frac{1}{p+1}\log\left( \frac{d}{d_0} \right) (\|u\|^{p+1}_{p+1}+|w|^{p+1}_{p+1}).
 \end{align}
Indeed, since $(u,w) \in W_1$, either $I(u,w)>0$ or $(u,w) = (0,0)$. Clearly, (\ref{E7}) holds for the case $(u,w) = (0,0)$.
It remains to consider the case $I(u,w)>0$. For any $\lambda>0$, we have 
\begin{align} \label{scaleI}
I(\lambda(u,w)) =  & \lambda^2 (\|\nabla u\|^2_2+|\Delta w|^2_2) - \lambda^{p+1} \left(\int_\Omega |u|^{p+1} \log |u| dx + \int_\Gamma |w|^{p+1} \log |w| d\Gamma\right) \notag\\
& -  \lambda^{p+1} \log \lambda  \left(    \|u\|_{p+1}^{p+1} +   |w|_{p+1}^{p+1} \right)   .
\end{align}
Note that, when $\lambda \rightarrow \infty$, the leading order in (\ref{scaleI}) is $\lambda^{p+1} \log \lambda$. 
Hence, $$\lim_{\lambda \rightarrow \infty} I(\lambda(u,w)) = -\infty.$$ Then, since $I(u,w)>0$, 
we use the Intermediate Value Theorem to conclude that there exists at least one real number $\lambda_0>1$ such that $I(\lambda_0(u,w))=0$. Therefore, $\lambda_0(u,w)$ belongs to the Nehari manifold $\mathcal N$, and thus $J(\lambda_0(u,w)) \geq d$ by (\ref{depth}).
We remark that the value of $\lambda_0$ depends on $(u,w)$. 

It follows that
\begin{align} \label{E7-1}
d&\leq J(\lambda_0(u,w)) \notag\\
&=\frac{1}{p+1}I(\lambda_0(u,w))+ \Big(\frac{1}{2} - \frac{1}{p+1}\Big) (\|\nabla(\lambda_0 u)\|^2_2+|\Delta(\lambda_0 w)|^2_2)    + \frac{  \|\lambda_0 u\|^{p+1}_{p+1}    }{(p+1)^2} 
+ \frac{|\lambda_0 w|^{p+1}_{p+1}}{(p+1)^2} \notag\\
&=\lambda_0^{p+1}\Big[ \Big(\frac{1}{2} - \frac{1}{p+1}\Big)\lambda^{1-p}_0 (\|\nabla u\|^2_2+|\Delta w|^2_2)  + \frac{ \|u\|^{p+1}_{p+1}  }{(p+1)^2} + \frac{|w|^{p+1}_{p+1}}{(p+1)^2} \Big] \notag\\
&\leq \lambda_0^{p+1}\Big[ \Big(\frac{1}{2} - \frac{1}{p+1}\Big) (\|\nabla u\|^2_2+|\Delta w|^2_2)  + \frac{ \|u\|^{p+1}_{p+1}  }{(p+1)^2} + \frac{|w|^{p+1}_{p+1}}{(p+1)^2} \Big],
\end{align} 
where the last inequality is due to the fact that $\lambda_0 > 1$ and $p>1$.

Combining (\ref{E1}) and (\ref{E7-1}) yields  
$$d\leq \lambda^{p+1}_0\widehat E(t)  \leq \lambda^{p+1}_0\widehat E(0) \leq    \lambda^{p+1}_0 d_0    .$$
Consequently,
\begin{align}\label{E8}
\lambda_0 \geq \left(d/d_0\right)^{\frac{1}{p+1}}>1,
\end{align}
since $0<d_0<d$. We remark that (\ref{E8}) shows that, although the value $\lambda_0$ depends on $(u,w)$, it has a uniform lower bound.  

By the definition of $I(u,w)$, it follows that
\begin{align*}
0&=I(\lambda_0(u,w))=\lambda_0^2(\|\nabla u\|^2_2+|\Delta w|^2_2)-\lambda_0^{p+1} \left(\int_\O|u|^{p+1}\log|u|dx  +   \int_\G|w|^{p+1} \log|w|d\G \right)\\
&\qquad\qquad\qquad\quad\ -\lambda_0^{p+1} \log\lambda_0(\|u\|^{p+1}_{p+1}+|w|^{p+1}_{p+1})\\
&=\lambda_0^{p+1} I(u,w)-(\lambda^{p+1}_0-\lambda_0^2)(\|\nabla u\|^2_2+|\Delta w|^2_2)-\lambda_0^{p+1} \log\lambda_0(\|u\|^{p+1}_{p+1}+|w|^{p+1}_{p+1}).
\end{align*}
Together with \eqref{E8}, this gives
\begin{align*}
I(u,w)&=(1-\lambda_0^{1-p})(\|\nabla u\|^2_2+|\Delta w|^2_2)+ \log\lambda_0(\|u\|^{p+1}_{p+1}+|w|^{p+1}_{p+1})\\
&\geq \left( 1-  \left(d/d_0\right)^{\frac{1-p}{p+1}} \right)(\|\nabla u\|^2_2+|\Delta w|^2_2)+\frac{1}{p+1}\log\left( d / d_0 \right)(\|u\|^{p+1}_{p+1}+|w|^{p+1}_{p+1}),
\end{align*}
i.e., the inequality \eqref{E7} holds.

Using (\ref{3-9-0}) and \eqref{E7}, we obtain
\begin{align}\label{E9}
&\int_\O|u|^{p+1} \log|u| dx+\int_\G|w|^{p+1} \log|w| d\G\notag\\
&\quad \leq     \left(d/d_0 \right)^{\frac{1-p}{p+1}}         (\|\nabla u\|^2_2+|\Delta w|^2_2)
-\frac{1}{p+1}\log\left( d/d_0 \right)(\|u\|^{p+1}_{p+1}+|w|^{p+1}_{p+1}).
\end{align}
By (\ref{E1}) and \eqref{E9}, we obtain
\begin{align}\label{E10}
Y_5&:=Y_{51}+Y_{52}\notag\\
& \leq  \left(\frac{1}{2}-\frac{1}{p+1}\right)    \left(d/d_0\right)^{\frac{1-p}{p+1}}         \int^T_0(\|\nabla u\|^2_2+|\Delta w|^2_2)dt\notag\\
&\quad+\frac{1}{(p+1)^2}\left[1- \frac{p-1}{2}\log\left( d/d_0 \right)\right]
\int^T_0 \left(\|u\|^{p+1}_{p+1}  + |w|^{p+1}_{p+1} \right) dt \notag\\
&\leq \theta\int^T_0\widehat E(t)dt,
\end{align}
where 
\begin{align}    \label{def-theta}
\theta=\max\left\{     \left(d/d_0\right)^{\frac{1-p}{p+1}}       , \;\; 1- \frac{p-1}{2}\log\left( d/d_0 \right)\right\}.
\end{align}
Since $p>1$ and $d>d_0$, we have $\theta<1$.

We insert \eqref{4-14}, \eqref{E2}, \eqref{E4}, \eqref{E6} and \eqref{E10} into \eqref{4-13-1} and choose $\varepsilon>0$ sufficiently small so that $3 \varepsilon + \theta \leq 1$ to obtain
\begin{align} \label{bE11}
\int^T_0\widehat E(t)dt
\leq C_{\varepsilon} \Big(T [D(T)]^{\frac{2}{m+1}}+T [D(T)]^{\frac{2}{r+1}}+\widehat E(T)+D(T)\Big).
\end{align}

From (\ref{def-theta}), $\theta$ depends on the ratio $d/d_0$. Since $3\varepsilon + \theta \leq 1$, it follows that $\varepsilon$ depends on $d/d_0$. 
Thus, $C_{\varepsilon}$ in (\ref{bE11}) depends on $d/d_0$.

Now, by (\ref{3-15}), we have $\widehat E'(t) \leq 0$, i.e., $\widehat E(t)$ is non-increasing. Hence, from (\ref{bE11}) we obtain
\begin{align} \label{E11}
T\widehat E(T)\leq \int^T_0\widehat E(t)\,dt     \leq        C_{\varepsilon} \Big(T [D(T)]^{\frac{2}{m+1}}+T [D(T)]^{\frac{2}{r+1}} +\widehat E(T)+D(T)\Big).
\end{align}
Choosing $T>2C_{\varepsilon}$ in \eqref{E11}, we have
$$
T\widehat E(T)\leq  C_{\varepsilon} \Big(T [D(T)]^{\frac{2}{m+1}}+T [D(T)]^{\frac{2}{r+1}} + D(T) \Big).
$$
Dividing both sides of the above inequality by $T\geq 1$, we obtain
\begin{align}\label{E12}
\widehat E(T)\leq  C_{\varepsilon} \Big([D(T)]^{\frac{2}{m+1}}+[D(T)]^{\frac{2}{r+1}} +D(T)\Big),
\end{align}
for sufficiently large $T$, depending on $\varepsilon$, which depends on the ratio $d/d_0$. This completes the proof of Lemma \ref{lem4-1}.
\end{proof}

\begin{rmk}
We emphasize that $\theta$ defined in (\ref{def-theta}) being strictly less than 1 is crucial for our argument. 
Indeed, the right-hand side of \eqref{E10} can be completely absorbed by the term $\int_0^T \widehat E(t) dt$ on the left-hand side of \eqref{4-13-1}. 
This makes the proof concise since there are no lower-order terms in the stabilization estimate. 
\end{rmk}

\vspace{0.1 in}

\subsection{Proof of Energy Decay Rates}

We now complete the proof of Theorem \ref{thm4-1} by using the stabilization estimate \eqref{E12}. The strategy of the proof is adapted from the paper \cite{LTa} by Lasiecka and Tataru.
The idea is to relate the stabilization estimate to an ODE, whose decay rate then determines the energy decay rate of the PDE.

\begin{proof}[Proof of Theorem \ref{thm4-1}] 
We define a function $\Phi(s): [0,\infty)  \rightarrow [0,\infty) $ by 
\begin{align}   \label{Phi}
 \Phi(s) :=C_0 (s^{\frac{2}{m+1}}+s^{\frac{2}{r+1}} + s).
\end{align}
Note that $\Phi(s)$ is monotone increasing and vanishing at the origin.
Then it follows from \eqref{4-12} and \eqref{en-id} that
\begin{align}\label{4-69}
\widehat{E}(T)\leq  \Phi(D(T))=  \Phi(\widehat{E}(0)-\widehat{E}(T)).
\end{align}
We infer from \eqref{4-69} that
\begin{align}\label{4-70}
(I + \Phi^{-1}) \widehat{E}(T) \leq \widehat{E}(0),
\end{align}
for $T>0$ sufficiently large. 

We can iterate \eqref{4-70} to obtain
\begin{align}\label{E13}
 (I  +    \Phi^{-1})  \widehat{E}((k+1)T) \leq\widehat{E}(kT),\
\ k=0,1,2,...
\end{align}
Next, we claim that 
\begin{align}\label{4-71}
\widehat{E}(kT)\leq \mathcal S(k),\ \ k=0,1,2,...,
\end{align}
where $\mathcal S(t)$ is the solution of the ODE
\begin{align}\label{4-73}
\begin{cases}
    \mathcal S'(t)+(I+ \Phi)^{-1}\mathcal S(t)=0, \\
    \mathcal S(0)=\widehat{E}(0).
 \end{cases}
\end{align}
We prove \eqref{4-71} by induction. Indeed, since $\mathcal S(0)=\widehat E(0)$, \eqref{4-71} holds for $k=0$. For a given $k\geq 0$, we assume $\widehat E(kT)\leq \mathcal S(k)$ and show $\widehat E((k+1)T)\leq \mathcal S(k+1)$. It follows directly from \eqref{E13} that
\begin{align}\label{E14}
\widehat E((k+1)T)\leq (I+\Phi^{-1})^{-1}\widehat E(kT).
\end{align}
Note that $(I+\Phi)^{-1}$ is increasing on $[0,\infty)$ and vanishing at the origin. From \eqref{4-73} we infer that $\mathcal S(t)$ is decreasing on $[0,\infty)$ 
and $\lim_{t\to\infty}\mathcal S(t)=0$. Hence we integrate \eqref{4-73} from $k$ to $k+1$ to obtain
\begin{align}\label{E15}
\mathcal S(k)-(I+\Phi)^{-1}\mathcal S(k)\leq \mathcal S(k)-\int^{k+1}_k (I+\Phi)^{-1}\mathcal S(t)dt = \mathcal S(k+1).
\end{align}
It is straightforward to verify that 
$I-(I+\Phi)^{-1}=(I+\Phi^{-1})^{-1}$, and thus \eqref{E15} shows that $(I+\Phi^{-1})^{-1} \mathcal S(k)\leq \mathcal S(k+1)$. 
Using the increasing property of $(I+\Phi^{-1})^{-1}$ and the induction hypothesis, we obtain
\begin{align}\label{E16}
(I+\Phi^{-1})^{-1}\widehat E(kT)\leq (I+\Phi^{-1})^{-1} \mathcal S(k)\leq \mathcal S(k+1).
\end{align}
Then it follows from \eqref{E14} and \eqref{E16} that $\widehat E((k+1)T)\leq \mathcal S(k+1)$. This proves \eqref{4-71}.

For any $t>T$, write
$t=kT+b$ with  $k\in\mathbb{N}$ and $0\leq b<T$, so that 
$k=\frac{t}{T}-\frac{b}{T}>\frac{t}{T}-1$. Since
$\widehat{E}(t)$ and $\mathcal S(t)$ are monotone decreasing, we obtain from \eqref{4-71} that for any $t>T$,
\begin{align}\label{4-74}
\widehat{E}(t)=\widehat{E}(kT+b)\leq\widehat{E}(kT)\leq
\mathcal S(k)\leq \mathcal S\left(\frac{t}{T}-1\right).
\end{align}

{\bf Case 1. $m=r=1$.}

In this case, $\Phi(s)$ is linear. Hence the ODE \eqref{4-73} is also linear, i.e., for a constant $\delta>0$,
\begin{align}
\begin{cases}
    \mathcal S'(t)+\delta \mathcal S(t)=0, \\
    \mathcal S(0)=\widehat{E}(0),
\end{cases}\nonumber
\end{align}
which implies
$$
\mathcal S(t)=\widehat{E}(0)e^{-\delta t}.
$$
It follows from \eqref{4-74} that for any $t>T$,
\begin{align}\label{4-75}
\widehat{E}(t)\leq
\widehat{E}(0)e^{-\delta(\frac{t}{T}-1)}= e^{\delta}\widehat{E}(0) e^{-\frac{\delta}{T}t}.
\end{align}
Then \eqref{exp} follows by setting $a=\frac{\delta}{T}$ in  \eqref{4-75}. 

\smallskip

{\bf Case 2. $m>1$ or $r>1$.}

In this case, for $0\leq s\leq 1$,
$$
\Phi(s) = C_0 (s^{\frac{2}{m+1}}+s^{\frac{2}{r+1}} + s)\leq 3C_0 s^{\frac{2}{\eta +1}},
$$
where $\eta=\max\{m,r\}$. Since $\lim_{t\to\infty}\mathcal S(t)=0$, we infer from \eqref{4-73} that for $t$ sufficiently large,
\begin{align*}
\begin{cases}
\mathcal S'(t)+C(\widehat E(0)) \mathcal S^{\frac{\eta +1 }{2}}(t)\leq 0,\\
\mathcal S(0)=\widehat E(0),
\end{cases}
\end{align*}
which yields
$$
\mathcal S(t)\leq C(\widehat E(0))(1+t)^{-\frac{2}{\eta-1}},\ \ \mbox{for}\ t\geq 0,
$$
where $C(\widehat E(0))$ depends on $\widehat E(0)$.

Consequently, by \eqref{4-74}, we obtain for $t\geq 0$,
$$
\widehat E(t)\leq C(\widehat E(0))(1+t)^{-\frac{2}{\eta-1}}.
$$
This completes the proof of Theorem \ref{thm4-1}.
\end{proof}

\vspace{0.1 in}

\section{Blow-Up of Weak Solutions} \label{sec-Blow}

This section is devoted to proving that the weak solution of system \eqref{PDE} blows up in finite time, provided that the source terms surpass the damping terms. We treat two cases separately: the negative initial energy case and the positive initial energy case.

\subsection{Blow-Up of Solutions with Negative Initial Energy}\label{blowup1}
In this subsection, we prove Theorem \ref{thm6-1}, which states that the weak solution of system \eqref{PDE} blows up in finite time when the source terms are stronger than the damping terms and the initial total energy \(\widehat E(0)\) is negative.

\begin{proof}[Proof of Theorem \ref{thm6-1}]
Let $(u(t), w(t))$ be a weak solution of \eqref{PDE}. 
We define the \emph{maximal life span} $T$ of such a solution to be the supremum of all $T^* > 0$ such that 
$(u(t), w(t))$ remains a solution to system \eqref{PDE} on $[0, T^*]$. By the definition of weak solutions (see Definition~\ref{def:weaksln}),
weak solutions are continuous in time with values in the energy space. 
Hence, the quadratic energy $E(t)$ is continuous on $[0, T)$. Moreover, by the local well-posedness result (Theorem~\ref{t:1}), 
the local existence time depends on the initial quadratic energy $E(0)$. Therefore, an \emph{a priori} bound on the quadratic energy allows one to extend a local solution to a global solution. On the other hand, if the maximal life span satisfies $T < \infty$, then
\begin{align}\label{6-33}
\limsup_{t \to T^-} E(t) = +\infty.
\end{align}

In what follows, we prove that $T$ is finite and derive an upper bound for $T$.

Motivated by \cite{ACCRT,BL3,GR1}, for any $t\in[0,T)$, we define
\begin{align} \label{SS}
G(t)=-\widehat{E}(t),\ \
S(t)= \int_\Omega |u|^{p+1} \log|u|dx + \int_\Gamma |w|^{p+1} \log |w|d\Gamma,
\end{align}
where  $\widehat E(t)$  is the total energy. Then, from \eqref{3-15}, we infer that
\begin{align}\label{B1}
G'(t)=-\widehat E'(t)=\|u_t\|^{m+1}_{m+1}+|w_t|^{r+1}_{r+1}\geq 0.
\end{align}
This yields
\begin{align}\label{B2}
G(t) \geq G(0)=-\widehat E(0)>0,\ \ \mbox{for}\ t>0.
\end{align}
Note that
\begin{align}\label{B3}
\frac{1}{p+1}S(t)=G(t)+E(t)+\frac{1}{(p+1)^2}(\|u\|^{p+1}_{p+1}+|w|^{p+1}_{p+1}),
\end{align}
and hence, for any $t>0$,
\begin{align}\label{B4}
\frac{1}{p+1}S(t)\geq G(t) \geq G(0)>0.
\end{align}

Since $p>m$ and $p>r>1$, we introduce a positive constant $a$ satisfying
\begin{align}\label{6-4}
0<a<\min\left\{\frac{1}{m+1}-\frac{1}{p+1},
\;\;\frac{1}{r+1}-\frac{1}{p+1},
\;\;\frac{p-1}{2(p+1)},    \;\;  \frac{1}{2} -  \frac{1}{r+1}   \right\}.
\end{align}

We define
\begin{align}  \label{def-Q}
Q(t)=(u(t),u_t(t))_\Omega+(w(t),w_t(t))_\G+(\gamma u(t),w(t))_\G.
\end{align}
As in \cite{GT}, we define
\begin{align} \label{def-Y}
\mathcal{Y}(t):=G^{1-a}(t)+\varepsilon Q(t),
\end{align}
where $0<\varepsilon\leq\min\{1,G(0)\}$ will be determined later. 

Our goal is to prove that $\mathcal{Y}(t)$ approaches infinity in finite time.

Since $p>m$ and  $p \frac{m+1}{m} <6$, it follows that $m<5$. 
Therefore, $u$ and $w$ satisfy the regularity requirements on the test functions $\phi$ and $\psi$,
respectively, in Definition \ref{def:weaksln}. Replacing $\phi$
by $u$ in \eqref{wkslnwave}, $\psi$ by $w$ in \eqref{wkslnplt}, and using (\ref{def-Q}), we obtain
\begin{align}\label{6-10}
&Q(t) =(u_0,u_1)_\O+(\gamma
u_0+w_1,w_0)_\Gamma + \int^t_0(\|u_t\|^2_2+|w_t|^2_2)ds \nonumber\\
&\quad-\int^t_0(\|\nabla u\|^2_2+|\Delta
w|^2_2)ds + 2\int^t_0(\gamma u, w_t)_\Gamma ds - \int^t_0\int_\Omega |u_t|^{m-1}u_t u dx ds \nonumber\\
&\quad-\int^t_0\int_\Gamma |w_t|^{r-1}w_twd\Gamma ds + \int^t_0\int_\Omega
|u|^{p+1}\log|u| dxds + \int^t_0\int_\Gamma |w|^{p+1}\log|w| d\Gamma ds.
\end{align}

It is straightforward to verify that $Q(t)$ is absolutely continuous and therefore differentiable a.e. on $[0,T)$. Thus,
\begin{align}\label{6-9}
Q'(t)&=\|u_t(t)\|^2_2+|w_t(t)|^2_2-(\|\nabla
u(t)\|^2_2+|\Delta w(t)|^2_2)
-\int_\Omega |u_t|^{m-1}u_t u dx     -\int_\Gamma |w_t|^{r-1} w_t w d\Gamma      \nonumber\\
&\quad+2\int_\Gamma\gamma u \cdot w_t d\Gamma +\int_\Omega
|u|^{p+1} \log|u| dx+\int_\Gamma
|w|^{p+1} \log|w| d\Gamma,\ \ \mbox{a.e.}\
\mbox{on}\ [0,T).
\end{align}
By (\ref{def-Y}) and (\ref{6-9}), it follows that
\begin{align}\label{6-8}
\mathcal{Y}'(t)=(1-a)G^{-a}(t)G'(t)+\varepsilon Q'(t).
\end{align}

Using (\ref{B1}), (\ref{B3}), (\ref{6-9}), and (\ref{6-8}), we deduce that
\begin{align}\label{B5}
\mathcal Y'(t)&=(1-a)G^{-a}(t)(\|u_t\|^{m+1}_{m+1}+|w_t|^{r+1}_{r+1}) + \frac{p+3}{2} \varepsilon(\|u_t\|^2_2+|w_t|^2_2)\notag\\
&\quad +    \frac{p-1}{2} \varepsilon(\|\nabla u\|^2_2+|\Delta w|^2_2)+\frac{1}{p+1}\varepsilon(\|u\|^{p+1}_{p+1}+|w|^{p+1}_{p+1}) + (p+1) \varepsilon G(t)\notag\\
&\quad-\varepsilon\int_\O|u_t|^{m-1}u_tudx-\varepsilon\int_\G|w_t|^{r-1}w_twd\G+2\varepsilon\int_\G\gamma u\cdot w_td\G.
\end{align}

By H\"{o}lder's inequality, since $p>m$, we have
\begin{align}\label{B6}
\int_\Omega
|u_t|^{m-1} u_t u dx  \leq \|u_t\|^{m}_{m+1}\|u\|_{m+1} \leq C \|u_t\|^{m}_{m+1}\|u\|_{p+1}.
\end{align}
Combining inequality \eqref{B2} with identity \eqref{B3}, we obtain
\begin{align} \label{B6-1}
\|u\|^{p+1}_{p+1}+|w|^{p+1}_{p+1} \leq (p+1) S(t).
\end{align}
Together with \eqref{B6}, this yields
\begin{align}\label{B7}
\int_\Omega
|u_t|^{m-1} u_t u dx \leq C   S^{\frac{1}{p+1}}(t)\|u_t\|^{m}_{m+1} = C S^{\frac{1}{p+1}-\frac{1}{m+1}}(t)S^{\frac{1}{m+1}}(t)\|u_t\|^{m}_{m+1}.
\end{align}

From \eqref{B4} and \eqref{B7}, we have for any $\delta_1>0$,
\begin{align}\label{6-18}
\int_\Omega |u_t|^{m-1} u_t u dx & \leq C G^{\frac{1}{p+1}-\frac{1}{m+1}}(t)S^{\frac{1}{m+1}}(t)\|u_t\|^{m}_{m+1}\nonumber\\
&\leq G^{\frac{1}{p+1}-\frac{1}{m+1}}(t)\Big[\delta_1S(t)+C_{\delta_1}   \|u_t\|^{m+1}_{m+1}\Big]\nonumber\\
&\leq \delta_1G^{\frac{1}{p+1}-\frac{1}{m+1}}(t)S(t)+C_{\delta_1}      \|u_t\|^{m+1}_{m+1} G^{-a}(t)G^{a+\frac{1}{p+1}-\frac{1}{m+1}}(t)\nonumber\\
&\leq \delta_1G^{\frac{1}{p+1}-\frac{1}{m+1}}(0)S(t)+C_{\delta_1}    \|u_t\|^{m+1}_{m+1} G^{-a}(t)G^{a+\frac{1}{p+1}-\frac{1}{m+1}}(0).
\end{align}
Note that the positive constant $a$ satisfies (\ref{6-4}), and hence $a+\frac{1}{p+1}-\frac{1}{m+1}<0$. By the same argument, we obtain that for any $\delta_2>0$,
\begin{align}\label{6-19}
\int_{\Gamma} |w_t|^{r-1} w_t   w d\Gamma \leq
\delta_2 G^{\frac{1}{p+1}-\frac{1}{r+1}}(0)S(t) + C_{\delta_2}  |w_t|_{r+1}^{r+1}  G^{-a}(t)G^{a+\frac{1}{p+1}-\frac{1}{r+1}}(0).
\end{align}

Using (\ref{B3}), we have
\begin{align}\label{B8}
\|\nabla u\|^2_2 \leq \frac{2}{p+1}S(t).
\end{align}
By the trace embedding and \eqref{B8}, we obtain
\begin{align}\label{B9}
2\int_\G\gamma u\cdot w_td\G\leq C_*\|\nabla u\|_2|w_t|_2\leq C_*\left(\frac{2}{p+1}\right)^{\frac{1}{2}}S^{\frac{1}{r+1} - \frac{1}{2}}(t)S^{1-\frac{1}{r+1}}(t)|w_t|_2,
\end{align}
where  $C_*>0$ is the trace embedding constant satisfying $|\gamma u|_2\leq C_*\|\nabla u\|_2$. 
Since $r>1$, we have $\frac{1}{r+1} - \frac{1}{2} < 0$. Consequently, by (\ref{B4}), we have
$$
S^{\frac{1}{r+1} - \frac{1}{2}}(t)\leq  (p+1)^{\frac{1}{r+1} - \frac{1}{2}}     G^{\frac{1}{r+1} - \frac{1}{2}}(t).
$$
Together with \eqref{B9}, this yields that for any $\delta_3>0$,
\begin{align}\label{B10}
&2\int_\G\gamma u\cdot w_td\G \notag\\
&\leq C_*    \sqrt{2} (p+1)^{\frac{1}{r+1} -1}       
G^{\frac{1}{r+1} - \frac{1}{2}}(t) \left(\delta_3 S(t)+C_{\delta_3}|w_t|^{r+1}_{r+1}\right) \notag\\
&\leq C_* \sqrt{2} (p+1)^{\frac{1}{r+1} -1}  \delta_3     G^{\frac{1}{r+1} - \frac{1}{2}}(0)      S(t)
+C_{\delta_3} G^{\frac{1}{r+1} - \frac{1}{2}+a}(0) G^{-a}(t)|w_t|^{r+1}_{r+1},
\end{align}
since $G(t) \geq G(0)>0$, where $\frac{1}{r+1} - \frac{1}{2}+a <0$ by (\ref{6-4}).

Inserting \eqref{6-18}, \eqref{6-19}, and \eqref{B10} into \eqref{B5}, we obtain
\begin{align}\label{e3}
\mathcal{Y}'(t)&\geq\left[(1-a)-\varepsilon C_{\delta_1}G^{a+\frac{1}{p+1}-\frac{1}{m+1}}(0)\right]G^{-a}(t)\|u_t\|^{m+1}_{m+1}\notag\\
&\quad+\left[(1-a)-\varepsilon C_{\delta_2}G^{a+\frac{1}{p+1}-\frac{1}{r+1}}(0)
- \varepsilon C_{\delta_3}    G^{\frac{1}{r+1} - \frac{1}{2} +a}(0)\right]G^{-a}(t)|w_t|^{r+1}_{r+1}\notag\\
&\quad+ \frac{p+3}{2}\varepsilon(\|u_t\|^2_2+|w_t|^2_2) + \frac{p-1}{2}    \varepsilon (\|\nabla u\|^2_2+|\Delta w|^2_2)\notag\\
&\quad-\varepsilon  \Big[\delta_1 G^{\frac{1}{p+1}-\frac{1}{m+1}}(0)+\delta_2 G^{\frac{1}{p+1}-\frac{1}{r+1}}(0)
+C_*   \sqrt{2} (p+1)^{\frac{1}{r+1} -1}      \delta_3      G^{\frac{1}{r+1}- \frac{1}{2}}(0)\Big]S(t)\notag\\
&\quad +\frac{1}{p+1}\varepsilon\Big(\|u\|^{p+1}_{p+1}+|w|^{p+1}_{p+1}\Big) + (p+1) \varepsilon G(t).
\end{align}

By (\ref{B3}), we have
$$
\frac{1}{2}(\|u_t\|^2_2+|w_t|^2_2+\|\nabla u\|^2_2+|\Delta w|^2_2)=\frac{1}{p+1}S(t)-G(t)-\frac{1}{(p+1)^2}(\|u\|^{p+1}_{p+1}+|w|^{p+1}_{p+1}).
$$
Applying this equality to \eqref{e3}, we obtain
\begin{align}\label{B11}
&\mathcal{Y}'(t) \geq\left[(1-a)-\varepsilon C_{\delta_1}G^{a+\frac{1}{p+1}-\frac{1}{m+1}}(0)\right]G^{-a}(t) \|u_t\|^{m+1}_{m+1}\notag\\
&\quad+\left[(1-a)-\varepsilon C_{\delta_2}G^{a+\frac{1}{p+1}-\frac{1}{r+1}}(0) - \varepsilon C_{\delta_3}
G^{\frac{1}{r+1} - \frac{1}{2}+a}(0)\right]G^{-a}(t)|w_t|^{r+1}_{r+1} \notag\\
&\quad +   \frac{p-1}{4}   \varepsilon(\|u_t\|^2_2+|w_t|^2_2+\|\nabla u\|^2_2+|\Delta w|^2_2)\notag\\
&\quad+\varepsilon \Big[       \frac{p-1}{2(p+1)}    -  \delta_1 G^{\frac{1}{p+1}-\frac{1}{m+1}}(0)-\delta_2 G^{\frac{1}{p+1}-\frac{1}{r+1}}(0)
- C_* \sqrt{2} (p+1)^{\frac{1}{r+1} -1} \delta_3 G^{\frac{1}{r+1} -\frac{1}{2}}(0)\Big] S(t)\notag\\
&\quad+\left[\frac{1}{p+1}-   \frac{p-1}{2(p+1)^2}    \right]\varepsilon\Big(\|u\|^{p+1}_{p+1}+|w|^{p+1}_{p+1}\Big)+\left[(p+1)- \frac{p-1}{2}\right]\varepsilon G(t).
\end{align}

Substituting into \eqref{B11} the choices
\begin{align*}
&\delta_1=\frac{p-1}{12 (p+1)}G^{\frac{1}{m+1}-\frac{1}{p+1}}(0),\ \ \delta_2=\frac{p-1}{12(p+1)}G^{\frac{1}{r+1}-\frac{1}{p+1}}(0), \\
&\delta_3=\frac{p-1}{12 \sqrt{2}  C_*} (p+1)^{-\frac{1}{r+1} } G^{\frac{1}{2} - \frac{1}{r+1}}(0),
\end{align*}
it follows that
\begin{align}\label{6-20}
\mathcal{Y}'(t) &\geq \left[(1-a)-\varepsilon C_{\delta_1}G^{a+\frac{1}{p+1}-\frac{1}{m+1}}(0)\right]G^{-a}(t)\|u_t\|^{m+1}_{m+1}\notag\\
&\quad+\left[(1-a)-\varepsilon C_{\delta_2}G^{a+\frac{1}{p+1}-\frac{1}{r+1}}(0)
- \varepsilon C_{\delta_3} G^{\frac{1}{r+1} - \frac{1}{2} + a}(0)\right] G^{-a}(t)|w_t|^{r+1}_{r+1}\notag\\
&\quad+\frac{p-1}{4}\varepsilon \left(\|u_t\|^2_2+|w_t|^2_2+\|\nabla u\|^2_2+|\Delta w|^2_2 \right) + \frac{p-1}{4(p+1)}\varepsilon S(t)\notag\\
&\quad+ \frac{p+3}{2(p+1)^2} \varepsilon \left(\|u\|^{p+1}_{p+1}+|w|^{p+1}_{p+1} \right)+\frac{p+3}{2}\varepsilon G(t).
\end{align}

Since $0<a<\frac{1}{2}$ by \eqref{6-4}, for fixed $\delta_1,\delta_2,\delta_3>0$, we choose $0<\varepsilon<1$ sufficiently small so that
$$
 (1-a)-\varepsilon C_{\delta_1}G^{a+\frac{1}{p+1}-\frac{1}{m+1}}(0) \geq 0,
$$
and
$$
(1-a)-\varepsilon C_{\delta_2}G^{a+\frac{1}{p+1}-\frac{1}{r+1}}(0)-\varepsilon C_{\delta_3}G^{\frac{1}{r+1} - \frac{1}{2} +a}(0)  \geq 0,
$$
 to obtain from \eqref{6-20}  that there exists a constant $C>0$ such that
\begin{align}\label{6-22}
\mathcal{Y}'(t)&\geq 
C\varepsilon(\|u_t\|^2_2+|w_t|^2_2+\|\nabla u\|^2_2+|\Delta w|^2_2+\|u\|^{p+1}_{p+1}+|w|^{p+1}_{p+1}+G(t)+S(t)) \geq 0.
\end{align}
This implies that $\mathcal{Y}(t)$ is monotone increasing on $[0,T)$. Then, by (\ref{def-Y}), we have
$$
\mathcal{Y}(t)=G^{1-a}(t)+\varepsilon Q(t) \geq  \mathcal Y(0) = G^{1-a}(0)+\varepsilon Q(0).
$$

If $Q(0)\geq0$, then we do not need any  further condition on
$\varepsilon$. However, if $Q(0)<0$, we further choose $\varepsilon$ such
that $0<\varepsilon\leq-\frac{G^{1-a}(0)}{2Q(0)}$. In both cases, we have
\begin{align}\label{6-23}
\mathcal{Y}(t)\geq \frac{1}{2}G^{1-a}(0)>0,\ \ \mbox{for}\ t\in[0,T).
\end{align}

By \eqref{6-4}, we have $\frac{2}{(1-2a)(p+1)}<1$. Let $\delta= 1- \frac{2}{(1-2a)(p+1)}>0$. 
We claim that 
\begin{align}\label{B12}
\mathcal{Y}'(t)\geq C {\varepsilon}^{1+\delta }  \mathcal{Y}^{\frac{1}{1-a}}(t),\ \ \forall\ t\in [0,T).
\end{align}
Here, the exponent $\frac{1}{1-a} \in (1,2)$ since $0<a<\frac{1}{2}$.
We now prove (\ref{B12}).

Indeed, if $Q(t)\leq 0$ for some $t\in[0,T)$, then for such a value
of $t$, we obtain
\begin{align}\label{6-25}
\mathcal{Y}^{\frac{1}{1-a}}(t)=[G^{1-a}(t)+\varepsilon Q(t)]^{\frac{1}{1-a}}\leq G(t).
\end{align}
Then, from \eqref{6-22} and \eqref{6-25}, we infer that
$$
\mathcal{Y}'(t)\geq C\varepsilon G(t) \geq C   \varepsilon^{1+\delta}     \mathcal Y^{\frac{1}{1-a}}(t).
$$
If $Q(t)> 0$ for some $t\in[0,T)$, since $\varepsilon \leq 1$, we have $\mathcal{Y}(t)=G^{1-a}(t)+\varepsilon Q(t) \leq G^{1-a}(t)+ Q(t)$,
and therefore
\begin{align}  \label{insert1}
\mathcal{Y}^{\frac{1}{1-a}}(t) \leq  C \left(G(t) + Q^{\frac{1}{1-a}}(t)   \right).
\end{align}
By the Cauchy-Schwarz inequality and the trace embedding, we have
\begin{align} 
Q^{\frac{1}{1-a}}(t) \leq C\left(\|u\|^{\frac{1}{1-a}}_2\|u_t\|^{\frac{1}{1-a}}_2+|w|^{\frac{1}{1-a}}_2|w_t|^{\frac{1}{1-a}}_2+|w|^{\frac{1}{1-a}}_2\|\nabla u\|^{\frac{1}{1-a}}_2\right).\notag
\end{align}
By Young's inequality, since $p>1$, we have
\begin{align}\label{B13}
Q^{\frac{1}{1-a}}(t) \leq C\left(\|u_t\|^2_2+|w_t|^2_2+\|\nabla u\|^2_2+\|u\|^{\frac{2}{1-2a}}_{p+1}+|w|^{\frac{2}{1-2a}}_{p+1}\right).
\end{align}

Using \eqref{B3} and \eqref{B4}, and recalling $\delta= 1- \frac{2}{(1-2a)(p+1)}>0$, we have
\begin{align}\label{B15}
\|u\|^{\frac{2}{1-2a}}_{p+1} &=\Big(\|u\|^{p+1}_{p+1}\Big)^{\frac{2}{(1-2a)(p+1)}}\leq C [S(t)]^{\frac{2}{(1-2a)(p+1)}}= C  S^{ -\delta}(t)  S(t)    \notag\\
&\leq  C  G^{-\delta}(0)   S(t)  \leq C \varepsilon^{-\delta} S(t),
\end{align}
where we choose $\varepsilon \leq  G(0)$. 
Similarly,
\begin{align}\label{B16}
|w|^{\frac{2}{1-2a}}_{p+1}\leq C \varepsilon^{-\delta} S(t).
\end{align}
Substituting \eqref{B15} and \eqref{B16} into \eqref{B13}, and using (\ref{B8}), we obtain
\begin{align}\label{B16-1}
Q^{\frac{1}{1-a}}(t)\leq C(\|u_t\|^2_2+|w_t|^2_2+ S(t) +\varepsilon^{-\delta} S(t)) \leq C  \varepsilon^{-\delta} \left(  \|u_t\|^2_2+|w_t|^2_2+ S(t) \right).
\end{align}
Therefore, using \eqref{6-22} and (\ref{insert1}), we obtain
\begin{align}
\mathcal{Y}'(t) &\geq  C\varepsilon(\|u_t\|^2_2+|w_t|^2_2+G(t)+S(t)) \notag\\
& \geq C \varepsilon (  G(t) + \varepsilon^{\delta} Q^{\frac{1}{1-a}}(t) )
\geq C \varepsilon^{1+\delta} (G(t) +  Q^{\frac{1}{1-a}}(t) ) \geq  C \varepsilon^{1+\delta}  \mathcal{Y}^{\frac{1}{1-a}}(t), 
\end{align}
for all  $t\in[0,T)$ such that $Q(t)>0$. Therefore, in both cases, \eqref{B12} holds.

Since $\frac{1}{1-a} \in (1,2)$, the differential inequality (\ref{B12}) shows that $\mathcal Y(t)$ blows up in finite time. Moreover, the maximal life span $T$ satisfies the upper bound
\begin{align} \label{Tbound}
T \leq C \varepsilon^{-(1+\delta)} \mathcal{Y}^{-\frac{a}{1-a}}(0) \leq C \varepsilon^{-(1+\delta)} G^{-a}(0),
\end{align}
where the last inequality follows from (\ref{6-23}).  Since $0<\varepsilon \leq G(0)$, it follows from (\ref{Tbound}) that smaller initial data may lead to a longer life span,
which is consistent with physical intuition.

By (\ref{6-33}), the quadratic energy approaches infinity at the blow-up time $T$, namely,
$\limsup_{t\rightarrow T^-} E(t) = +\infty$. 
Furthermore, we claim that
\begin{align} \label{6-34}
\limsup_{t\rightarrow T^-} (\|\nabla u(t)\|_2^2 + |\Delta w(t)|_2^2) = +\infty.
\end{align}
Indeed, by \eqref{B3} and the fact that $\limsup_{t\rightarrow T^-} E(t) = +\infty$, we obtain 
\begin{align} \label{6-36}
\limsup_{t\rightarrow T^-} S(t) = + \infty.
\end{align}
Using the estimate in \eqref{3-11-1-1}, we derive 
\begin{align*}
\|\nabla u(t)\|_2^{p+1+\sigma} + |\Delta w(t)|_2^{p+1+\sigma} \geq C S(t),
\end{align*}
which, together with (\ref{6-36}), yields (\ref{6-34}). This completes the proof.
\end{proof}

\vspace{0.1 in}

\subsection{Blow-Up of Solutions with Positive Initial Energy}\label{blowup2}
In this subsection, we prove Theorem \ref{thm6-2}. This result states that the weak solution of system \eqref{PDE} blows up in finite time when the source terms are stronger than the damping terms, the initial total energy \(\widehat E(0)\) is positive but sufficiently small, and the initial quadratic energy \(E(0)\) is sufficiently large.

\begin{proof}[Proof of Theorem \ref{thm6-2}]
 We define the life span \(T\) of a weak solution \((u(t),w(t))\) to be the supremum of all \(T^*>0\) such that \((u(t),w(t))\) is a solution to system \eqref{PDE} on \([0,T^*]\).

By the assumptions \(p\frac{m+1}{m}<6\) and \(p>m\), we have \(p<5\). Using the estimate in (\ref{3-11-1-1}) and the embedding constants defined in (\ref{3-12}), we obtain
\begin{align}\label{B18}
\int_\Omega |u|^{p+1}\log|u| dx\leq \frac{1}{e (5-p)}\|u\|^{6}_{6} \leq \frac{\alpha_1}{e (5-p)}\|\nabla u\|^{6}_2 \leq \frac{8\alpha_1}{e (5-p)} (E(t))^3,
\end{align}
and 
\begin{align}\label{B19}
\int_\Gamma |w|^{p+1} \log|w| d\Gamma  \leq \frac{1}{e (5-p) } |w|^{6}_{6}   \leq \frac{\alpha_2}{e (5-p)} |\Delta w|^{6}_2 \leq  \frac{8\alpha_2}{e (5-p)} (E(t))^3.
\end{align}
Combining \eqref{B18} and \eqref{B19}, we have, for $t\in [0,T)$,
\begin{align}\label{b8}
\widehat{E}(t)& \geq E(t)-\frac{1}{p+1} \left(\int_\Omega |u|^{p+1}\log|u| dx + \int_\Gamma |w|^{p+1} \log|w| d\Gamma \right)\nonumber\\
&\geq E(t) - \frac{8(\alpha_1 + \alpha_2)}{(p+1) e (5-p)}(E(t))^{3}.
\end{align}
Recall that the function $\Psi:\mathbb{R}^+\to\mathbb{R}$ is defined by $\Psi(z)=z - \frac{8(\alpha_1 + \alpha_2)}{(p+1) e (5-p)} z^{3}$. 
Then \eqref{b8} is equivalent to
\begin{align}\label{b9}
\widehat{E}(t)\geq \Psi(E(t)),\ \ \forall\ t\in [0,T).
\end{align}
The function \(\Psi(z)\) is continuously differentiable and concave, and it attains its maximum at
$z = z_0 = \frac{1}{2} \left[ \frac{(p+1) e (5-p)}{6(\alpha_1 + \alpha_2) } \right]^{\frac{1}{2}} >0$. Define
$\hat{d}:=\sup_{[0,\infty)}\Psi(z)=\Psi(z_0)$. Note that $\Psi(z)$ is decreasing for $z>z_0$. Since $0\leq\widehat{E}(0)<\hat{d}=\Psi(z_0)$, 
there exists a unique $z_1 >z_0>0$ such that $\Psi(z_1)=\widehat{E}(0)$.
Thus, we infer from \eqref{b9} that
\begin{align}\label{b11}
\hat{d}=\Psi(z_0)>\Psi(z_1)=\widehat{E}(0)\geq \widehat{E}(t)\geq \Psi(E(t)),\ \ \forall\ t\in [0,T).
\end{align}
Recall that $\Psi(z)$ is decreasing and continuous for $z>z_0$, and that $E(t)$ is also continuous. Since we assume $E(0)>z_0$, it follows from \eqref{b11} that
\begin{align}\label{b12}
E(t)\geq z_1>z_0,\ \ \forall\ t\in[0,T).
\end{align}

We define
\begin{align}\label{defGG}
\widehat G(t) := B-\widehat{E}(t),
\end{align}
where the constant $B$ was introduced in (\ref{defB}). By (\ref{3-15}), we have
\begin{align}\label{GGin}
\widehat G'(t) = - \widehat{E}'(t) =  \|u_t\|^{m+1}_{m+1}+|w_t|^{r+1}_{r+1}\geq 0. 
\end{align}
Hence, $\widehat G(t)$ is non-decreasing in $t$.
From the assumption $\widehat E(0)<B$, we have $\widehat G(0) := B-\widehat{E}(0)>0$. Consequently,
\begin{align} \label{GG0}
\widehat G(t) \geq  \widehat G(0) >0,  \;\;\text{for all} \;\; t\in [0,T).
\end{align}

Recall that the function $Q(t)$ is defined in (\ref{def-Q}). We consider the function
\begin{align}\label{b14}
Y(t):=\widehat G^{1-a}(t)+\varepsilon Q(t),
\end{align}
for some $a\in (0,\frac{1}{2})$ satisfying (\ref{6-4}) and some $\varepsilon>0$. We will show that $Y(t)$ approaches infinity in finite time by choosing $\varepsilon$ sufficiently small.

Note that 
\begin{align}\label{b15}
Y'(t)=(1-a)\widehat G^{-a}(t)\widehat G'(t)+\varepsilon Q'(t),
\end{align}
where $Q'(t)$ is given in (\ref{6-9}).

We multiply (\ref{B3}) by $\frac{p+3}{2}$ and substitute $G(t) = \widehat G(t) - B$. It follows that
\begin{align} \label{B20-1}
\frac{p+3}{2} (\widehat G(t) - B) + \frac{p+3}{2} E(t) + \frac{p+3}{2(p+1)^2} \left( \|u\|_{p+1}^{p+1} + |w|_{p+1}^{p+1} \right) - \frac{p+3}{2(p+1)} S(t) =0.
\end{align}
Using (\ref{6-9}), (\ref{GGin}), (\ref{b15}) and (\ref{B20-1}), we obtain
\begin{align}\label{B20}
Y'(t)&= (1-a)\widehat G^{-a}(t)\Big[\|u_t\|^{m+1}_{m+1}+|w_t|^{r+1}_{r+1}\Big]+2\varepsilon(\|u_t\|^2_2+|w_t|^2_2)+\frac{p+3}{2}\varepsilon\widehat G(t)
\notag\\
&\quad-\frac{p+3}{2}\varepsilon B+\frac{p-1}{2}\varepsilon E(t)+\frac{p+3}{2(p+1)^2}\varepsilon(\|u\|^{p+1}_{p+1}+|w|^{p+1}_{p+1})+\frac{p-1}{2(p+1)}\varepsilon S(t)\notag\\
&\quad-\varepsilon\int_\O|u_t|^{m-1}u_tudx-\varepsilon\int_\G|w_t|^{r-1}w_twd\G+2\varepsilon\int_\G\gamma u\cdot w_td\G.
\end{align}

Recall from (\ref{SS}) and (\ref{defGG}) that $G(t) = \widehat G(t) - B$. Therefore, using (\ref{B3}) and \eqref{GG0}, we obtain
\begin{align}\label{B21}
\|\nabla u\|^2_2\leq \frac{2}{p+1}S(t)+2B.
\end{align}
From (\ref{3-9}) and (\ref{3-8}), we have  
\begin{align} \label{Ehat}
\widehat E(t) = E(t) - \frac{1}{p+1} S(t) + \frac{1}{(p+1)^2} \left( \|u\|_{p+1}^{p+1} + |w|_{p+1}^{p+1} \right).
\end{align}
Then, using (\ref{Ehat}), \eqref{Bz} and \eqref{b12}, we obtain
\begin{align} \label{b27}
\widehat G(t) = B-\widehat{E}(t) \leq B-E(t)+ \frac{1}{p+1} S(t) < z_0- z_1 + \frac{1}{p+1}S(t) < \frac{1}{p+1}S(t),
\end{align}
for $t\in [0,T)$.

Using H\"{o}lder's inequality, the trace embedding, and \eqref{B21}, we obtain 
\begin{align}\label{B22}
2\int_\G\gamma u\cdot w_t d\G & \leq 2|\gamma u|_2 |w_t|_2  \leq C_1\|\nabla u\|_2|w_t|_{r+1}\leq C_1\left(\frac{2}{p+1}S(t)+2B\right)^{\frac{1}{2}}|w_t|_{r+1}\notag\\
&\leq C_1\sqrt{\frac{2}{p+1}}S^{\frac{1}{2}}(t)|w_t|_{r+1}+\sqrt{2} C_1 B^{\frac{1}{2}}|w_t|_{r+1},
\end{align}
where $C_1 = 2 C_* |\Gamma|^{\frac{r-1}{2(r+1)}}$. Here, $C_*>0$ is the embedding constant satisfying $|\gamma u|_2\leq C_*\|\nabla u\|_2$.

Now we estimate the two terms on the right-hand side of \eqref{B22}. 

Since $r>1$, we apply Young's inequality to the first term on the right-hand side of \eqref{B22}:
\begin{align}\label{B25}
 C_1 \sqrt{\frac{2}{p+1}}S^{\frac{1}{2}}(t)|w_t|_{r+1}
&=  C_1\sqrt{\frac{2}{p+1}} S^{\frac{1}{2}-\frac{r}{r+1}}(t)S^{\frac{r}{r+1}}(t)|w_t|_{r+1}  \notag\\
&\leq C_1\sqrt{\frac{2}{p+1}} S^{\frac{1}{2}-\frac{r}{r+1}}(t) (\delta_3 S(t) + C_{\delta_3} |w_t|_{r+1}^{r+1}   ) \notag\\
&\leq C\delta_3\widehat G^{\frac{1}{2}-\frac{r}{r+1}}(0)S(t)+C_{\delta_3}\widehat G^{\frac{1}{2}-\frac{r}{r+1}+a}(0)\widehat G^{-a}(t)|w_t|^{r+1}_{r+1},
\end{align}
for any $\delta_3>0$, where the last inequality follows from (\ref{b27}) and (\ref{GG0}).

To estimate the second term on the right-hand side of (\ref{B22}), we apply Young's inequality:
\begin{align}\label{B23}
\sqrt{2} C_1 B^{\frac{1}{2}}|w_t|_{r+1}&=\sqrt{2}C_1 B^{\frac{1}{2}}|w_t|_{r+1} (\widehat G^{-a}(t))^{\frac{1}{r+1}}(\widehat G^{-a}(t))^{-\frac{1}{r+1}}\notag\\
& \leq \frac{(\sqrt{2}C_1)^{r+1}}{r+1}\widehat G^{-a}(t)|w_t|^{r+1}_{r+1}+\frac{r}{r+1}B^{\frac{r+1}{2r}}(\widehat G^{-a}(t))^{-\frac{1}{r}}\notag\\
&\leq \frac{(\sqrt{2}C_1)^{r+1}}{r+1}\widehat G^{-a}(t)|w_t|^{r+1}_{r+1}+\frac{r}{r+1}B^{\frac{r+1}{2r}}\left(\frac{1}{p+1}S(t)\right)^{\frac{a}{r}},
\end{align}
where the last inequality follows from (\ref{b27}). Using Young's inequality again, we obtain
\begin{align}\label{B24}
\frac{r}{r+1}B^{\frac{r+1}{2r}}\left(\frac{1}{p+1}S(t)\right)^{\frac{a}{r}}  \leq \frac{p-1}{4(p+1)}S(t) + C_2 B^{\frac{r+1}{2(r-a)}},
\end{align}
where $C_2 = \frac{r-a}{r}\left(\frac{4a}{r(p-1)}\right)^{\frac{a}{r-a}}  \left(\frac{r}{r+1} \right)^{\frac{r}{r-a}}$.

Inserting \eqref{B24} into \eqref{B23} yields
\begin{align} \label{B24-2}
\sqrt{2} C_1 B^{\frac{1}{2}}|w_t|_{r+1} \leq  \frac{(\sqrt{2}C_1)^{r+1}}{r+1}\widehat G^{-a}(t)|w_t|^{r+1}_{r+1} +  \frac{p-1}{4(p+1)}S(t) + C_2 B^{\frac{r+1}{2(r-a)}}.
\end{align}

From (\ref{B22}), (\ref{B25}) and (\ref{B24-2}), it follows that
\begin{align}\label{B26}
2\int_\G\gamma u\cdot w_td\G 
\leq &C\delta_3\widehat G^{\frac{1}{2}-\frac{r}{r+1}}(0)S(t)+C_{\delta_3}\widehat G^{\frac{1}{2}-\frac{r}{r+1}+a}(0)\widehat G^{-a}(t)|w_t|^{r+1}_{r+1} \notag\\
& +\frac{(\sqrt{2}C_1)^{r+1}}{r+1}\widehat G^{-a}(t)|w_t|^{r+1}_{r+1} +  \frac{p-1}{4(p+1)}S(t) + C_2 B^{\frac{r+1}{2(r-a)}}.
\end{align}

Using (\ref{Ehat}), (\ref{Bz}) and (\ref{b12}), we obtain
\begin{align*} 
&\frac{1}{(p+1)^2} \left( \|u\|_{p+1}^{p+1} + |w|_{p+1}^{p+1} \right)
=  \widehat E(t) - E(t) + \frac{1}{p+1} S(t)  \notag\\
&< B - E(t) + \frac{1}{p+1} S(t)  < z_0 - z_1 + \frac{1}{p+1} S(t) < \frac{1}{p+1} S(t).
\end{align*}
That is,
\begin{align} \label{B26-1}
\|u\|_{p+1}^{p+1} + |w|_{p+1}^{p+1} < (p+1) S(t).
\end{align}
Using (\ref{GG0}), (\ref{b27}) and (\ref{B26-1}), and following the same estimates as in (\ref{B6})--\eqref{6-19}, we infer that for any $\delta_1>0$,
\begin{align}
\int_\Omega |u_t|^{m-1} u_t udx \leq \delta_1 \widehat G^{\frac{1}{p+1}-\frac{1}{m+1}}(0)S(t)
+C_{\delta_1} \|u_t\|^{m+1}_{m+1}      \widehat G^{-a}(t)      \widehat G^{a+\frac{1}{p+1}-\frac{1}{m+1}}(0)     ,   \label{b28} 
 \end{align}
 and for any $\delta_2>0$,
 \begin{align}
\int_\Gamma |w_t|^{r-1} w_t w d\Gamma\leq
\delta_2 \widehat G^{\frac{1}{p+1}-\frac{1}{r+1}}(0)S(t)+C_{\delta_2}   |w_t|^{r+1}_{r+1}       \widehat G^{-a}(t) \widehat G^{a+\frac{1}{p+1}-\frac{1}{r+1}}(0).   \label{b29}
\end{align}
Inserting \eqref{B26}, \eqref{b28} and \eqref{b29} into \eqref{B20}, we have
\begin{align}\label{b30}
&Y'(t)\geq\left[(1-a)-\varepsilon C_{\delta_1} \widehat G^{a+\frac{1}{p+1}-\frac{1}{m+1}}(0)\right]\widehat G^{-a}(t)\|u_t\|^{m+1}_{m+1}\notag\\
&\quad+\left[(1-a)-\varepsilon C_{\delta_2}\widehat G^{a+\frac{1}{p+1}-\frac{1}{r+1}}(0)-\varepsilon C_{\delta_3}\widehat G^{\frac{1}{2}-\frac{r}{r+1}+a}(0)
- \varepsilon \frac{(\sqrt{2}C_1)^{r+1}}{r+1} \right]\widehat G^{-a}(t)|w_t|^{r+1}_{r+1}\notag\\
&\quad+2\varepsilon(\|u_t\|^2_2+|w_t|^2_2) +   \varepsilon   \frac{p+3}{2}  \widehat G(t) + \varepsilon \left(\frac{p-1}{2} E(t)  -\frac{p+3}{2} B  -C_2 B^{\frac{r+1}{2(r-a)}} \right)   \notag\\
&\quad+\varepsilon \Big[\frac{p-1}{4(p+1)}-\delta_1 \widehat G^{\frac{1}{p+1}-\frac{1}{m+1}}(0)-\delta_2\widehat G^{\frac{1}{p+1}-\frac{1}{r+1}}(0)-C\delta_3\widehat G^{\frac{1}{2}-\frac{r}{r+1}}(0)\Big]S(t)\notag\\
&\quad +\frac{p+3}{2(p+1)^2}\varepsilon\Big(\|u\|^{p+1}_{p+1}+|w|^{p+1}_{p+1}\Big).
\end{align}

In \eqref{b30}, we take $\delta_1,\delta_2,\delta_3>0$ sufficiently small so that
\begin{align} \label{b30-1}
\delta_1 \widehat G^{\frac{1}{p+1}-\frac{1}{m+1}}(0)+\delta_2\widehat G^{\frac{1}{p+1}-\frac{1}{r+1}}(0)+C\delta_3\widehat G^{\frac{1}{2}-\frac{r}{r+1}}(0) \leq \frac{p-1}{8(p+1)}.
\end{align}
For   fixed   $\delta_1,\delta_2,\delta_3>0$, we choose $\varepsilon>0$ sufficiently small so that
\begin{align} \label{b30-2}
(1-a)-\varepsilon C_{\delta_1} \widehat G^{a+\frac{1}{p+1}-\frac{1}{m+1}}(0) \geq 0,
\end{align}
and
\begin{align} \label{b30-3}
(1-a)-\varepsilon C_{\delta_2}\widehat G^{a+\frac{1}{p+1}-\frac{1}{r+1}}(0)-\varepsilon C_{\delta_3}\widehat G^{\frac{1}{2}-\frac{r}{r+1}+a}(0)
- \varepsilon \frac{(\sqrt{2}C_1)^{r+1}}{r+1} \geq 0.
\end{align}
By \eqref{b12}, $E(t)>z_0$ for all $t\in [0,T)$. Therefore, by \eqref{defB}, we have
\begin{align} \label{b30-4}
\frac{p-1}{4} E(t)  -\frac{p+3}{2} B  -C_2 B^{\frac{r+1}{2(r-a)}} >  \frac{p-1}{4}z_0  -\frac{p+3}{2} B  -C_2 B^{\frac{r+1}{2(r-a)}} =0.
\end{align}
Thus, combining \eqref{b30}--(\ref{b30-4}), we conclude that there exists a constant $C>0$ such that
\begin{align*}
Y'(t)\geq C \varepsilon(\|u_t\|^2_2+|w_t|^2_2+\widehat G(t)+E(t)+S(t)+\|u\|^{p+1}_{p+1}+|w|^{p+1}_{p+1})>0,
\end{align*}
for all $t\in [0,T)$.  Then, by an argument similar to that used in the proof of Theorem \ref{thm6-1}, we obtain
\begin{align*}
Y'(t)\geq C Y^{\frac{1}{1-a}}(t),\ \ \mbox{for}\ t\in[0,T).
\end{align*}
Therefore, the weak solution blows up in finite time. This completes the proof of Theorem \ref{thm6-2}.
\end{proof}

\vspace{0.1 in}

Finally, we prove Corollary \ref{cor1}, which states that if the source terms are stronger than the damping terms, the initial data belong to the unstable region $W_2$,
and the initial total energy is sufficiently small, then weak solutions blow up in finite time.

\begin{proof}[Proof of Corollary \ref{cor1}]
It is sufficient to show that \(E(0)>z_0\) if \((u_0,w_0)\in W_2\). Indeed, by the definition of \(W_2\) and (\ref{B18})--(\ref{B19}), we have
\begin{align} \label{corr-1}
\|(u_0,w_0)\|_X^2
= \|\nabla u_0\|_2^2+|\Delta w_0|_2^2
&< \int_\Omega |u_0|^{p+1}\log |u_0| \, dx
 + \int_\Gamma |w_0|^{p+1} \log |w_0| \, d\Gamma  \notag\\
&\leq  \frac{\alpha_1}{e(5-p)}\|\nabla u_0\|_2^{6}
+ \frac{\alpha_2}{e(5-p)} |\Delta w_0|_2^{6} \notag\\
&\leq \frac{\alpha_1+\alpha_2}{e(5-p)} \|(u_0,w_0)\|_X^6.
\end{align}
By the assumptions \(p\frac{m+1}{m}<6\) and \(p>m\), we have \(p<5\). 

Since \((u_0,w_0)\neq (0,0)\), we can divide both sides of \eqref{corr-1} by \(\|(u_0,w_0)\|_X^2\) to deduce that
\begin{align*}
\|(u_0,w_0)\|_X^2
> \left[\frac{e(5-p)}{\alpha_1+\alpha_2}\right]^{1/2}.
\end{align*}
It follows that
\begin{align*}
E(0) \geq \frac{1}{2}\|(u_0,w_0)\|_X^2
> \frac{1}{2}\left[\frac{e(5-p)}{\alpha_1+\alpha_2}\right]^{1/2}
> \frac{1}{2}\left[\frac{(p+1)e(5-p)}{6(\alpha_1+\alpha_2)}\right]^{1/2}
= z_0.
\end{align*}
This completes the proof of Corollary \ref{cor1}.
\end{proof}

\vspace{0.1 in}

\def\cprime{$'$}

\end{document}